\newtheorem{thm}{Theorem}[section]
\newtheorem{prop}[thm]{Proposition}
\newtheorem{lem}[thm]{Lemma}
\newtheorem*{main}{Main Theorem}
\theoremstyle{definition}
\newtheorem{defn}[thm]{Definition}
\newtheorem{rem}[thm]{Remark}
\newtheorem{conv}[thm]{Convention}
\newtheorem{ques}[thm]{Question}
\newcommand{\abs}[1]{\lvert{#1}\rvert}
\DeclareMathOperator{\CAT}{CAT}
\DeclareMathOperator{\Div}{Div}
\DeclareMathOperator{\ldiv}{ldiv}
\begin{document}

\title[Divergence of Morse geodesics]{Divergence of Morse geodesics}

\author{Hung Cong Tran}
\address{Dept.\ of Mathematical Sciences\\
University of Wisconsin--Milwaukee\\
P.O.~Box 413\\
Milwaukee, WI 53201\\
USA}
\email{hctran@uwm.edu}

\date{\today}

\begin{abstract}
Behrstock and Dru\c{t}u raised a question about the existence of Morse geodesics in $\CAT(0)$ spaces with divergence function strictly greater than $r^n$ and strictly less than $r^{n+1}$, where $n$ is an integer greater than $1$. In this paper, we answer the question of Behrstock and Dru\c{t}u by showing that for each real number $s\geq 2$, there is a $\CAT(0)$ space $X$ with a proper and cocompact action of some finitely generated group such that $X$ contains a Morse bi-infinite geodesic with the divergence equivalent to $r^s$.
\end{abstract}

\subjclass[2000]{%
20F67, 
20F65} 
\maketitle

\section{Introduction}
The \emph{divergence} of two geodesic rays $\alpha$ and $\beta$ with the same initial point $x_0$ in a geodesic space $X$, denoted $\Div_{\alpha,\beta}$, is the function $g: (0,\infty)\to(0,\infty]$ defined as follows. For each positive number $r$ the value $g(r)$ is $\infty$ if there is no path outside the open ball with radius $r$ about $x_0$ connecting $\alpha(r)$ and $\beta(r)$ or otherwise the value $g(r)$ is the infimum on the lengths of all paths outside the open ball with radius $r$ about $x_0$ connecting $\alpha(r)$ and $\beta(r)$. Consequently, the \emph{divergence of a bi-infinite geodesic} $\gamma$, denoted $\Div_{\gamma}$, is the divergence of the two geodesic rays obtained from $\gamma$ with the initial point $\gamma(0)$. A (quasi-)geodesic $\gamma$ is \emph{Morse} if for any constants $K>1$ and $L> 0$, there is a constant $M = M(K,L)$ such that every $(K,L)$--quasi-geodesic $\sigma$ with endpoints on $\gamma$ lies in the $M$--neighborhood of $\gamma$. In \cite{BD}, Behrstock and Dru\c{t}u asked a question:


\begin{ques} (see Question 1.5, \cite{BD})
\label{q2}
Can the divergence function of a Morse geodesic in a $\CAT(0)$ space be
greater than $r^{m-1}$ and less than $r^m$ for each $m\geq 3$?
\end{ques}

The main theorem gives a positive answer to the above question as follows:
\begin{main}
For each integer $m\geq 2$, there is a $\CAT(0)$ space $Y_m$ with a proper, cocompact action of some finitely generated group such that for each $s$ in $[2,m]$ there is a Morse geodesic in $Y_m$ with the divergence function equivalent to $r^s$.
\end{main}

In the above theorem, we refer the reader to Convention \ref{conv} for the concept of equivalence. 

The existence of $\CAT(0)$ spaces containing Morse geodesics with polynomials divergence function of degree greater than one can be deduced from the work of Macura \cite{MR3032700}, Dani-Thomas \cite{MR3314816}, Charney-Sultan~\cite{MR3339446}, and Tran~\cite{MR3361149}. However, the existence of Morse geodesic with divergence greater than $r^{m-1}$ and less than $r^m$ for $m\geq 3$ was still mysterious. 

In the main theorem, we use the spaces constructed by Dani-Thomas in \cite{MR3314816} to construct Morse geodesics with desired divergence function. Dani-Thomas built these spaces to study right-angled Coxeter groups with polynomial divergence of arbitrary degree. They developed most of techniques which allow us to construct Morse geodesics with arbitrary polynomials divergence easily. Motivated by their work, we worked on the construction of Morse geodesics with the divergence functions equivalent to $r^s$ for any real number $s\geq 2$. Though we used the techniques of Dani-Thomas in our construction of geodesics, some additional techniques were required to obtain Morse geodesics with divergence function $r^s$ where $s$ is non-integer number greater than 2.     

In \cite{MR1254309}, Gersten introduced the divergence of a space. Consequently, he defined the divergence of a finitely generated group to be the divergence of its Cayley graph. This concept has been studied by Macura \cite{MR3032700}, Behrstock-Charney \cite{MR2874959}, Duchin-Rafi \cite{MR2563768}, Dru{\c{t}}u-Mozes-Sapir \cite{MR2584607}, Sisto \cite{Sisto} and others. Moreover, the divergence of a space is a quasi-isometry invariant, and it is therefore a useful tool to classify finitely generated groups up to quasi-isometry. In the concept of the divergence of a space, Gersten used the concept of the divergence of two geodesic rays as the main idea to define it. Therefore, if we understand the divergence of bi-infinite geodesic, we may also understand the divergence of the whole space as well as a group that acts properly, cocompactly on the space. In particular, we hope that the main theorem can shed a light for the positive answer to the following question: 

\begin{ques} (see Question 1.3, \cite{BD})
\label{q1}
Are there examples of $\CAT(0)$ groups whose divergence in the sense of Gersten is strictly between $r^{m-1}$ and $r^m$ for some $m$?
\end{ques}


The Morse property of quasi-geodesics is a quasi-isometry invariant. Therefore, it is a useful tool to classify geodesic spaces as well as finitely generated groups up to quasi-isometry. The main theorem reveals a geometric aspect of Morse geodesics. Therefore, the main theorem reveals a geometric aspect of Morse quasi-geodesics since each Morse quasi-geodesic has a finite Hausdorff distance from some Morse geodesic. Moreover, the main theorem helps us come up with a new quasi-isometry invariant of spaces, called spectrum divergence. The \emph{spectrum divergence} of a geodesic space is a family $S$ of functions from positive reals to positive reals such that a function $f$ belongs to $S$ if there is a bi-infinite Morse geodesic in the space with divergence function equivalent to $f$. The main theorem reveals that the spectrum divergence of a $\CAT(0)$ space can contain uncountably many of power functions. Moreover, it is still unknown what kind of function besides power functions can belongs to the spectrum of a $\CAT(0)$ space.


\subsection*{Acknowledgments}
I would like to thank my advisor Prof.~Christopher Hruska for very helpful comments and suggestions. I also thank the referee for the helpful advice and the suggestion that improves the main theorem in the article. I want to thank Pallavi Dani, Jason Behrstock, Anne Thomas and Hoang Thanh Nguyen for their helpful correspondences.

\section{Right-angled Coxeter groups}

\begin{defn}
Given a finite, simplicial graph $\Gamma$, the associated right-angled Coxeter group $G_{\Gamma}$ has generating set $S$ the vertices of $\Gamma$, and relations $s^2 = 1$ for all $s$ in $S$ and $st = ts$ whenever $s$ and $t$ are adjacent vertices. 



\end{defn}

\begin{defn}
Given a nontrivial, finite, simplicial, triangle-free graph $\Gamma$ with the set $S$ of vertices, we may define the \emph{Davis complex} $\Sigma=\Sigma_{\Gamma}$ to be the Cayley 2--complex for the presentation of the Coxeter group $G_{\Gamma}$, in which all disks bounded by a loop with label $s^2$ for $s$ in $S$ have been shrunk to an unoriented edge with label $s$. Hence, the vertex set of $\Sigma$ is $G_{\Gamma}$ and the 1-skeleton of $\Sigma$ is the Cayley graph $C_{\Gamma}$ of $G_{\Gamma}$ with respect to the generating set $S$. Since all relators in this presentation other than $s^2 = 1$ are of the form $stst = 1$, $\Sigma$ is a square complex. The Davis complex $\Sigma_{\Gamma}$ is a $\CAT(0)$ space and the group $G_{\Gamma}$ acts properly and cocompactly on the Davis complex $\Sigma_{\Gamma}$ (see \cite{MR2360474}).
\end{defn}

\begin{defn} 
Let $\Gamma$ be a nontrivial, finite, simplicial, triangle-free graph and $\Sigma=\Sigma_{\Gamma}$ the associated Davis complex. We observe that each edge of $\Sigma$ is on the boundary of a square. We define \emph{a midline} of a square in $\Sigma$ to be a geodesic segment in the square connecting two midpoints of its opposite edges. We define \emph{a hyperplane} to be a connected subspace that intersects each square in $\Sigma$ in empty set or a midline. Each hyperplane divides the square complex $\Sigma$ into two components. We define \emph{the support} of a hyperplane $H$ to be the union of squares which contain edges of $H$. For each vertex $a$ of $\Gamma$, we define the subcomplex $H_{a}$ to be the support of the hyperplane that crosses the edge labeled by $a$ with one endpoint $e$.

Since each square in $\Sigma$ has the label of the form $stst$, each midline in each square of $\Sigma$ connects two midpoints of edges with the same label. Thus, each hyperplane is a graph and vertices are the midpoints of edges with the same label. Therefore, we define \emph{the type} of a hyperplane $H$ to be the label of edges containing vertices of $H$. Obviously, if two hyperplanes with the types $a$ and $b$ intersect, then $a$ and $b$ commute. 
\end{defn}

\begin{rem}
The length of a path $\alpha$ in $C_{\Gamma}$ is equal to its number of hyperplane-crossings. A path is a geodesic if and only if it does not cross any hyperplane twice (see Lemma~3.2.14 \cite{MR2360474}).

If $\Gamma'$ is a full subgraph of $\Gamma$, then the Davis complex $\Sigma_{\Gamma'}$ with respect
to the graph $\Gamma'$ embeds isometrically in the Davis complex $\Sigma_{\Gamma}$ with respect
to the graph $\Gamma$.
\end{rem}



\begin{figure}
\centering
\labellist
\small\hair 2pt

\pinlabel $\Gamma_1$ at 100 230
\pinlabel $\Gamma_2$ at 250 230
\pinlabel $\Gamma_3$ at 500 230
\pinlabel $\Gamma_m$ at 975 230

\pinlabel $a_0$ at 830 225
\pinlabel $b_0$ at 830 100
\pinlabel $b_1$ at 775 165
\pinlabel $a_1$ at 885 165
\pinlabel $a_2$ at 925 165
\pinlabel $a_3$ at 975 165
\pinlabel $a_{m-1}$ at 1120 140
\pinlabel $a_m$ at 1170 165
\pinlabel $b_2$ at 830 25
\pinlabel $b_3$ at 880 25
\pinlabel $b_4$ at 930 25
\pinlabel $b_{m-1}$ at 1015 25
\pinlabel $b_m$ at 1065 25
\endlabellist
\includegraphics[scale=0.35]{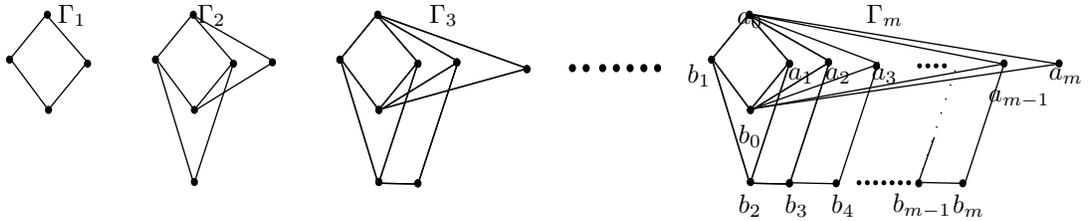}
\caption{A collection of nontrivial, connected, finite, simplicial, triangle-free graphs.}
\label{asecond}
\end{figure}

\section{Proof of the Main Theorem}

\begin{conv}
\label{conv}
Let $f$ and $g$ be two functions from positive reals to positive reals. We say that $f$ is \emph{dominated} by $g$, denoted \emph{$f\preceq g$}, if there are positive constants $A$, $B$, $C$ such that $f(x)\leq g(Ax)+Bx$ for all $x>C$. We say that $f$ is \emph{equivalent} to $g$, denoted $f\sim g$, if $f\preceq g$ and $g\preceq f$. 
\end{conv}

\begin{prop}
\label{mp}
For each positive integer $m$, let $\Gamma_m$ be the graph shown in Figure \ref{asecond} and $X_m$ be the Davis complex with respect to $\Gamma_m$. Then the $\CAT(0)$ space $X_2$ contains a Morse geodesic with divergence $r^2$ and the $\CAT(0)$ space $X_m$ contains a Morse geodesic with divergence $r^s$ for each $m\geq 3$ and $s$ in $(m-1,m]$.
\end{prop}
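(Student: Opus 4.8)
The plan is to treat the two regimes separately --- $X_2$ with divergence $r^2$, and $X_m$ ($m\ge 3$) with divergence $r^s$ for $s\in(m-1,m]$ --- by exhibiting in each case an explicit bi-infinite geodesic $\gamma$ and then pinching its divergence between upper and lower multiples of $r^s$, up to the equivalence $\sim$ of Convention \ref{conv}. Since $\Div_\gamma(r)$ is the infimal length of a path joining $\gamma(r)$ to $\gamma(-r)$ outside the open ball $B(\gamma(0),r)$, the whole argument breaks into four steps: (i) constructing $\gamma$; (ii) proving $\gamma$ is Morse; (iii) producing short avoidant paths to get $\Div_\gamma\preceq r^s$; and (iv) showing every avoidant path is long to get $r^s\preceq\Div_\gamma$.

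First I would construct $\gamma$ as a reduced bi-infinite word in the generators $a_i,b_i$ of $G_{\Gamma_m}$, read off as an edge path in the Cayley graph $C_{\Gamma_m}$ and hence as a genuine geodesic in $X_m$ (a path is geodesic iff it crosses no hyperplane twice, by the Remark). The graphs $\Gamma_m$ carry the nested structure of Dani--Thomas \cite{MR3314816}, in which $\Gamma_{m-1}$ sits as a full subgraph of $\Gamma_m$ and the divergence of the ambient group jumps from degree $m-1$ to degree $m$; accordingly, the two ``pure'' behaviors available to a geodesic are $r^{m-1}$ (staying inside the embedded $X_{m-1}$) and $r^m$ (fully exploiting the $m$-th level). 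To reach a non-integer exponent I would assemble $\gamma$ from baseline blocks realizing the $X_{m-1}$-type behavior interspersed with level-$m$ excursion blocks, tuning the widths and spacings of the excursions so that the total excursion width $W(r)$ within distance $r$ of $\gamma(0)$ grows like $r^{\,s-(m-1)}$. Since $s-(m-1)\in(0,1]$, this interpolates between the two pure regimes: $s=m$ is given by the dense, maximal geodesic, and $s\to(m-1)^+$ by increasingly sparse excursions; the case $X_2$, $s=2$, is simply the maximal geodesic at level $2$.

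For the Morse property I would verify that $\gamma$ is contracting, which in a $\CAT(0)$ space is equivalent to being Morse (Charney--Sultan, \cite{MR3339446}). Concretely, using the hyperplane combinatorics I would bound the diameter of the nearest-point projection to $\gamma$ of any ball disjoint from $\gamma$, showing this projection is strongly contracting: the point is that a deep detour away from $\gamma$ would be forced to cross, and then re-cross, a controlled family of hyperplanes dual to $\gamma$, which a geodesic cannot do. The lower bound of step (iv) will in any case certify superlinear divergence, reinforcing Morseness.

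The upper bound $\Div_\gamma\preceq r^s$ is obtained by exhibiting, for each $r$, an explicit path from $\gamma(r)$ to $\gamma(-r)$ avoiding $B(\gamma(0),r)$: one routes the path around the baseline using the lower-level ($\le m-1$) structure and detours around the level-$m$ excursions, each detour of length of order $r^{m-1}$, with the detours accumulating to total length of order $r^{m-1}W(r)\sim r^s$. The genuinely hard step is the matching lower bound $r^s\preceq\Div_\gamma$. Here I would adapt the Dani--Thomas separation lemmas to show that any path joining $\gamma(r)$ and $\gamma(-r)$ while avoiding $B(\gamma(0),r)$ must cross a prescribed family of hyperplanes determined by the non-commutation pattern in $\Gamma_m$, and then count these crossings so that the excursion widths reappear and the bound accumulates to order $r^{m-1}W(r)\sim r^s$, rather than the cruder $r^{m-1}$ or $r^m$. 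Making this count tight enough to recover the fractional exponent --- in particular controlling exactly how far into the $m$-th level an avoidant path is compelled to go near each excursion --- is the main obstacle, and is precisely where techniques beyond those of Dani--Thomas are required.
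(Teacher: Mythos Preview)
Your outline matches the paper's strategy: an explicit bi-infinite word geodesic built from two block types, upper bound by an explicit avoidant path assembled block-by-block, lower bound via counting forced crossings of a family of translated walls, and Morse via Charney--Sultan. Two places where your description drifts from what the paper actually does are worth flagging.

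First, the paper's two block types are $a_mb_2$ and $a_mb_m$; both use the level-$m$ generator $a_m$, so neither ``stays inside $X_{m-1}$''. The dichotomy is that between consecutive translates of $H_{a_m}$ the block $a_mb_2$ yields \emph{linear} pairwise divergence (Lemma~\ref{bl3}, via a one-ended virtually abelian subcomplex) while $a_mb_m$ yields $r^{m-1}$ (Lemma~\ref{bl2}). The interpolation is thus between linear and $r^{m-1}$ per-step cost: with $n$ expensive blocks among the first $\sim n^t$ blocks one has $r\sim n^t$ and the avoidant-path cost is $\sim n\cdot r^{m-1}=r^{m-1+1/t}$. Your formula $r^{m-1}W(r)$ comes out right if $W(r)$ is read as the \emph{number} of expensive blocks, but the framing ``baseline $=X_{m-1}$-type'' is misleading and does not by itself produce a cheap block with linear pairwise cost; that is the content of Lemma~\ref{bl3}. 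The endpoint $s=m$ is handled in the paper not by the same family but by the periodic geodesic $\cdots a_mb_ma_mb_m\cdots$ and a Van Kampen corridor argument (Proposition~\ref{propo}).

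Second, step~(iv) bounds $\Div_\gamma$, the divergence based at $\gamma(0)$. The Charney--Sultan criterion (Proposition~\ref{CS}) requires superlinear \emph{lower} divergence, the infimum over all basepoints $\gamma(t)$; since $\gamma_{t,m}$ is not periodic, these need not coincide. The paper proves $\ldiv_{\gamma_{t,m}}\succeq r^2$ by a separate short argument, rerunning the hyperplane-crossing count uniformly in the basepoint. Your contracting-projection sketch could in principle substitute, but ``forced to cross, and then re-cross, a controlled family of hyperplanes'' is not yet an argument: a geodesic never re-crosses a hyperplane, so what one actually needs is a lower bound on the path-length between successive first-crossings of the $s_iH_{a_m}$, exactly as the paper does.
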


In the rest of the paper, for each positive integer $m$ we are going to use the notation $X_m$ for the Davis complex with respect to the graph $\Gamma_m$ shown in Figure \ref{asecond}. We remark that graphs $\Gamma_m$ and spaces $X_m$ were first constructed by Dani-Thomas in \cite{MR3314816}, and that we will also use some of their results in the proof of Proposition \ref{mp}. In particular, the existence of Morse geodesics with quadratic divergence in $X_2$ can also be seen in \cite{MR3314816}. In the following definition, we construct Morse bi-infinite geodesics with desired divergence functions.
\begin{defn}
For each integer $m\geq 3$, integer $i\geq 1$ and real number $t>1$, let $w_{m,i,t}$ be the word $(a_m b_m) (a_mb_2)^{\lfloor i^{t-1}\rfloor}$. If $m$ and $t$ are clear from context, we can use the notation $w_i$ instead of $w_{m,i,t}$. Let $\gamma_{t,m}$ be the bi-infinite path in $X_m$ which passes through e and labeled by $\cdots (a_mb_2)(a_mb_2)(a_mb_2)w_1 w_2 w_3\cdots$, such that $\gamma_{t,m}(0)=e, \gamma_{t,m}(4)=w_1$, and $\gamma_{t,m}(-2i)=(a_mb_2)^{-i}$ for each positive integer $i$. 
\end{defn}

We observe that the labels of two consecutive edges of $\gamma_{t,m}$ do not commute. Thus, $\gamma_{t,m}$ is a bi-infinite geodesic (see Theorem 3.4.2, \cite{MR2360474}). We define the function $f_t$ on the set of positive integers as follows: 
\[f_t(n)=\lfloor 1^{t-1}\rfloor+\lfloor 2^{t-1}\rfloor+\lfloor 3^{t-1}\rfloor+\lfloor 4^{t-1}\rfloor+\lfloor 5^{t-1}\rfloor+\dots+\lfloor n^{t-1}\rfloor.\]

There are constants $0< h_t\leq 1/2$ and $n_t>0$ such that for each $n>n_t$ the following holds:
\[h_t n^t \leq f_t(n)\leq n^t.\]

We are going to use the constants $h_t$ and $n_t$ many times in the rest of the paper.

\begin{rem}
For each nontrivial, finite, simplicial, triangle-free graph $\Gamma$, the associated Davis complex $\Sigma_{\Gamma}$ is a $\CAT(0)$ space. The first skeleton $C_{\Gamma}$ of $\Sigma_{\Gamma}$ is a Cayley graph of the group $G_{\Gamma}$ with the word metric. It is not hard to see that the natural embedding of $C_{\Gamma}$ in to $\Sigma_{\Gamma}$ is a quasi-isometry. Moreover, we can see easily that the divergence function of a pair geodesic rays with the same initial point in $C_{\Gamma}$ with respect to the word metric is equivalent to the divergence function of this pair of rays with respect to the $\CAT(0)$ metric.

We remark that the main theorem is stated under the $\CAT(0)$ metric and most results about divergence of pairs of geodesic rays we are going to use from \cite{MR3314816} are stated under the word metric. However, we can apply these results to the case of the $\CAT(0)$ metric by the above observation.
\end{rem}

\begin{rem}
\label{r1}
Let $\alpha$ and $\beta$ be two geodesic rays in a $\CAT(0)$ space with the same initial point $x_0$. Assume that $\Div_{\alpha,\beta}(r)\geq f(r)$. Using the fact that the projection onto a closed ball does not increase distances, we can show that if $\eta$ is a path outside $B(x_0,r)$ connecting two points on $\alpha$ and $\beta$, then $\ell(\eta)\geq f(r)$. These observations will be used sometimes in the rest of the paper.
\end{rem}

We now recall Gersten's definition of divergence from \cite{MR1254309} and get more results on the divergence of pairs of geodesic rays from \cite{MR3314816}.

Let $X$ be a geodesic space and $x_0$ one point in $X$. Let $d_{r,x_0}$ be the induced length metric on the complement of the open ball with radius $r$ about $x_0$. If the point $x_0$ is clear from context, we can use the notation $d_r$ instead of using $d_{r,x_0}$.

\begin{defn}
Let $X$ be a geodesic space with geodesic extension property and $x_0$ one point in $X$. We define the \emph{divergence} of $X$, denoted $Div_X$, as a function $\delta:[0, \infty)\to [0, \infty)$ as follows: 

For each $r$, let $\delta(r)=\sup d_r(x_1,x_2)$ where the supremum is taken over all $x_1, x_2 \in S_r(x_0)$ such that $d_r(x_1, x_2)<\infty$.
\end{defn}

\begin{rem}
\label{rma1}
We remark that the above definition is only applied to geodesic spaces with geodesic extension property (i.e. any finite geodesic segment can be extended to an infinite geodesic ray) and it is a simplified version of the concept of the divergence of geodesic spaces in general (see \cite{MR1254309}). Since the spaces $X$ that we will consider (Cayley graphs of right-angled Coxeter groups or Davis complexes of right-angled Coxeter groups) have the geodesic extension property, the above definition works well for the purpose of this paper. Moreover, it is not hard to see that the divergence of each pair of rays with the same initial point in $X$ must be dominated by the divergence of $X$.
\end{rem}

\begin{lem}
\label{bl0}
Let $m$ be an arbitrary positive integer. For $1 \leq n \leq m$, let $\alpha_n$ and $\beta_n$ be any geodesic rays in $X_{m+2}$ satisfying the
following conditions:
\begin{enumerate}
\item $\alpha_n$ emanates from $e$ and travels along $H_{b_{n+1}}$, and
\item $\beta_n$ emanates from $e$ and travels along one of $H_{a_n}$, $H_{b_n}$, or $H_{b_{n+2}}$.
\end{enumerate}
Then the divergence of the pair of rays $(\alpha_n,\beta_n)$ dominates $r^n$ in $X_{m+2}$.
\end{lem}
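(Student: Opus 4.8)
The plan is to argue by induction on $n$, passing first to the Cayley graph $C_{\Gamma_{m+2}}$ (which is quasi-isometric to $X_{m+2}$, so that divergences of pairs of rays agree by the remark following the definition of the Davis complex) and then extracting one extra power of $r$ at each stage from a family of roughly $r$ parallel hyperplanes that every competing path must cross. Working in the word metric lets me use the fact recorded earlier that the length of a path equals its number of hyperplane-crossings, which is the combinatorial engine of the argument.

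For the base case $n=1$ I only need a linear lower bound. Since $\alpha_1$ and $\beta_1$ leave $e$ by crossing hyperplanes of distinct, non-commuting types, one checks they are not asymptotic, so the convex function $t \mapsto d(\alpha_1(t),\beta_1(t))$ is unbounded and hence grows at least linearly; as any path outside $\ball{e}{r}$ joining $\alpha_1(r)$ and $\beta_1(r)$ has length at least $d(\alpha_1(r),\beta_1(r))$, the divergence of the pair dominates $r$.

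For the inductive step, fix $r$ and let $\eta$ be any path outside $\ball{e}{r}$ joining $\alpha_n(r)$ and $\beta_n(r)$. First I would isolate the hyperplanes $V_1,\dots,V_k$ dual to the edges of $\alpha_n$ that are transverse to $H_{b_{n+1}}$; since $\alpha_n$ is a geodesic it crosses each exactly once, and the adjacency pattern of $\Gamma_{m+2}$ near $b_{n+1}$ should guarantee both $k\succeq r$ and that each $V_i$ separates $\alpha_n(r)$ from $\beta_n(r)$, forcing $\eta$ to cross every $V_i$. Each support $V_i$ is a convex subcomplex isometric to the Davis complex of a full subgraph of $\Gamma_{m+2}$ that carries, after shifting indices down by one, exactly a configuration of the type in the statement at level $n-1$. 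The sub-path of $\eta$ between the crossings of $V_i$ and $V_{i+1}$, together with the arcs it cuts off on these supports, then yields a path witnessing the divergence of an $(n-1)$-configuration inside an isometrically embedded smaller Davis complex, and this path stays outside a ball of radius $\succeq r$ there. By the inductive hypothesis its length is $\succeq r^{n-1}$; summing over the $k\succeq r$ essentially disjoint sub-paths gives $\ell(\eta)\succeq r\cdot r^{n-1}=r^n$, as required.

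The main obstacle is the purely combinatorial heart of the inductive step: verifying, from the explicit structure of $\Gamma_{m+2}$ around $b_{n+1}$, that the transverse hyperplanes $V_i$ are $\succeq r$ in number and each separates the two endpoints so that $\eta$ is compelled to cross it, and, most delicately, that the slab between consecutive crossings genuinely realizes the level-$(n-1)$ hypothesis \emph{at scale} $\succeq r$ inside an isometrically embedded copy of a smaller Davis complex. Controlling that scale—so that each detour truly costs $r^{n-1}$ rather than a smaller power of the radius—and ensuring the sub-paths are disjoint enough that their lengths add are the points I expect to require the most care; here I would rely on the product-like structure of hyperplane supports and on the isometric embedding of full-subgraph Davis complexes recorded in the preceding remarks.
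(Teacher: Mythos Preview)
The paper does not itself prove this lemma; it simply cites the proof of Proposition~5.3 in Dani--Thomas~\cite{MR3314816}. Your inductive hyperplane-crossing argument is precisely the Dani--Thomas strategy, so you have correctly identified the intended approach; the delicate points you flag---that the crossed hyperplanes genuinely separate the two endpoints, and that the inductive hypothesis applies inside each support at scale $\succeq r$---are exactly where Dani--Thomas do the work, and both follow from the explicit adjacency pattern of $b_{n+1}$ in $\Gamma_{m+2}$ together with the triangle inequality $d(g_i,s_i)\geq r-d(e,s_i)$ controlling the radius of the avoided ball at each crossing.
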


The proof of the above lemma was shown in the proof of Proposition 5.3 in \cite{MR3314816}.

\begin{lem}
\label{bl1}
Let $m\geq 3$ be an integer. In the $\CAT(0)$ space $X_m$, let $\alpha$ be an arbitrary geodesic ray emanating from $e$ that travels along $H_{a_m}$ and let $\beta$ be a path emanating from $e$ that travels along $H_{b_m}$. Then the divergence of $\alpha$ and $\beta$ is equivalent to $r^{m-1}$ in $X_m$.
\end{lem}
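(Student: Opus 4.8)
The plan is to establish the two inequalities $\Div_{\alpha,\beta}\preceq r^{m-1}$ and $\Div_{\alpha,\beta}\succeq r^{m-1}$ separately. Throughout I would work in the Cayley graph $C_{\Gamma_m}$ with the word metric rather than directly in the $\CAT(0)$ metric on $X_m$; this is harmless by the remark that the inclusion $C_{\Gamma_m}\inclusion X_m$ is a quasi-isometry carrying the divergence of a pair of rays to an equivalent function. I would also invoke Remark \ref{r1} repeatedly to convert a lower bound on $\Div_{\alpha,\beta}$ into a lower bound on the length of any single path that leaves $\ball{e}{r}$ and joins a point of $\alpha$ to a point of $\beta$.

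The lower bound is the heart of the lemma. Since $a_m$ and $b_m$ are not adjacent in $\Gamma_m$, the hyperplanes $H_{a_m}$ and $H_{b_m}$ do not cross, and I expect that any path $\eta$ outside $\ball{e}{r}$ joining $\alpha(r)\in H_{a_m}$ to $\beta(r)\in H_{b_m}$ is forced to cross on the order of $r$ hyperplanes of the next level down. The plan, mirroring the Dani--Thomas argument behind Lemma \ref{bl0}, is to show that each such crossing forces a sub-detour of length $\succeq r^{m-2}$, where the relevant obstruction is governed by the lower-level pair $\bigl(H_{b_{m-1}},H_{b_m}\bigr)$. This pair is exactly one of the configurations controlled by Lemma \ref{bl0} applied in $X_m=X_{(m-2)+2}$ with $n=m-2$: there the first ray travels along $H_{b_{n+1}}=H_{b_{m-1}}$ and the second is permitted to travel along $H_{b_{n+2}}=H_{b_m}$, so the divergence of this pair is already $\succeq r^{m-2}$. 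Summing the forced sub-detours over the $\asymp r$ crossings gives $\ell(\eta)\succeq r\cdot r^{m-2}=r^{m-1}$, and Remark \ref{r1} turns this into $\Div_{\alpha,\beta}\succeq r^{m-1}$.

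For the upper bound I would build an explicit path from $\alpha(r)$ to $\beta(r)$ lying outside $\ball{e}{r}$ by concatenating on the order of $r$ elementary detours, each moving the path one step across the top level of the graph and costing length $\preceq r^{m-2}$. The cost of a single elementary detour is bounded by routing it through an isometrically and convexly embedded copy of $X_{m-2}$, which exists because $\Gamma_{m-2}$ is a full subgraph of $\Gamma_m$; on that subcomplex two points at distance comparable to $r$ can be joined outside the $r$--ball by a path of length $\preceq r^{m-2}$, since the whole-space divergence of $X_{m-2}$ is $\sim r^{m-2}$. Convexity guarantees that the ambient $r$--ball meets the embedded copy in its own $r$--ball, so the detour genuinely avoids $\ball{e}{r}$; a bookkeeping of hyperplane crossings then confirms the whole concatenation stays outside the ball, and the total length is $\preceq r\cdot r^{m-2}=r^{m-1}$.

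The main obstacle will be the inductive descent in the lower bound: one must verify that avoiding $\ball{e}{r}$ genuinely forces on the order of $r$ crossings of level-$(m-1)$ hyperplanes and that each such crossing independently costs $\succeq r^{m-2}$, without double-counting the length of $\eta$ across distinct crossings, so that the estimate lands at exactly $r^{m-1}$ rather than at $r^{m-2}$ or $r^m$. Keeping the combinatorial accounting of which hyperplanes must be crossed consistent with the geometric constraint of avoiding the ball, together with checking the base case $m=3$ (where the bound is quadratic) by hand, is the delicate part; the upper bound, by contrast, is a comparatively direct construction once the embedded copy of $X_{m-2}$ and the elementary detours are identified.
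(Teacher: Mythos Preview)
Your strategy is workable but takes a substantially longer road than the paper does. For the lower bound you propose to re-run one level of the Dani--Thomas induction by hand: cut $\eta$ at its $\sim r$ crossings of the hyperplanes dual to $\beta\vert_{[0,r]}$ and bound each subpath below by $r^{m-2}$ via Lemma~\ref{bl0} with $n=m-2$ in $X_m=X_{(m-2)+2}$. The paper instead observes that the ray $\alpha$ along $H_{a_m}$ carries the label $a_0b_0a_0b_0\cdots$ (or its shift), and hence, after embedding $X_m$ isometrically into $X_{m+2}$, \emph{the same ray $\alpha$ also travels along $H_{a_{m-1}}$}. This lets one invoke Lemma~\ref{bl0} directly with $n=m-1$ in $X_{m+2}$, obtaining $\Div_{\alpha,\beta}\succeq r^{m-1}$ in one stroke; the passage back from $X_{m+2}$ to $X_m$ is immediate since an avoidant path in $X_m$ remains avoidant in $X_{m+2}$. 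Thus the ``inductive descent'' you flag as the main obstacle is entirely bypassed.

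For the upper bound the contrast is similar. You plan to thread $\sim r$ elementary detours through translates of $X_{m-2}$, each of cost $\preceq r^{m-2}$. The paper notices something simpler: since $\alpha$ uses only $a_0,b_0$ and $\beta$ uses only $a_{m-1},b_{m-1}$, both rays already lie in the isometrically embedded copy of $X_{m-1}$. The whole-space divergence of $X_{m-1}$ is $\sim r^{m-1}$ (Dani--Thomas), so a single avoidant path in $X_{m-1}$ of length $\preceq r^{m-1}$ does the job, with no concatenation or bookkeeping needed. Your construction can be made to work, but note that $\beta$ does \emph{not} lie in $X_{m-2}$ (its labels $a_{m-1},b_{m-1}$ are absent from $\Gamma_{m-2}$), so the detours must pass through many distinct translates, and specifying those translates and the intermediate points is exactly the extra labour the paper's embedding trick avoids.
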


\begin{proof}
We know that the space $X_m$ embeds isometrically in $X_{m+2}$. We observe that the ray $\alpha$ is labeled by $a_0b_0a_0b_0\cdots$ or $b_0a_0b_0a_0\cdots$ in $X_m$. Similarly, the ray $\beta$ is labeled by $a_{m-1}b_{m-1}a_{m-1}b_{m-1}\cdots$ or $b_{m-1}a_{m-1}b_{m-1}a_{m-1}\cdots$ in $X_m$. Therefore, the ray $\alpha$ also travels along $H_{a_{m-1}}$ in $X_{m+2}$. Moreover, the ray $\beta$ travels along $H_{b_m}$ in $X_{m+2}$. Thus, the divergence of $\alpha$ and $\beta$ dominates $r^{m-1}$ in $X_{m+2}$ by Lemma~\ref{bl0}. Since any path that avoids a ball centered at some point $x_0$ with some radius $r$ in $X_m$ still avoids the ball at $x_0$ with the same radius $r$ in $X_{m+2}$, the divergence of $\alpha$ and $\beta$ also dominates $r^{m-1}$ in $X_m$.

Since $\alpha$ and $\beta$ are also geodesic rays in $X_{m-1}$ and the divergence of $X_{m-1}$ is $r^{m-1}$ (see \cite{MR3314816}), the divergence of $\alpha$ and $\beta$ is dominated by $r^{m-1}$ in $X_{m-1}$. Again, the space $X_{m-1}$ embeds isometrically in $X_m$. Therefore, any path that avoids a ball centered at some point $x_0$ with some radius $r$ in $X_{m-1}$ still avoids the ball at $x_0$ with the same radius $r$ in $X_m$. This implies that the divergence of $\alpha$ and $\beta$ also dominates $r^{m-1}$ in $X_m$. Therefore, the divergence of $\alpha$ and $\beta$ is equivalent to $r^{m-1}$ in $X_m$.
\end{proof}

\begin{lem}
\label{bl2}

Let $m\geq 3$ be an integer. In the $\CAT(0)$ space $X_m$, let $\alpha$ be an arbitrary geodesic ray emanating from $e$ that travels along $H_{a_m}$ and let $\alpha'$ be a path emanating from $e$ consisting of a geodesic segment labeled $a_mb_m$ followed by an arbitrary geodesic ray emanating from $a_mb_m$ that travels along $a_mb_mH_{a_m}$. Then $\alpha'$ is a geodesic ray and the divergence of the pair $(\alpha,\alpha')$ is equivalent to $r^{m-1}$.
\end{lem}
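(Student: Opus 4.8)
The plan is to treat the two assertions of the lemma separately and to reduce the divergence estimate to the configuration already analysed in Lemma~\ref{bl1}. First I would verify that $\alpha'$ is a geodesic ray by the hyperplane criterion recalled after the definition of a hyperplane: a path in $X_m$ is a geodesic if and only if it crosses no hyperplane twice. The initial segment labelled $a_mb_m$ crosses exactly the wall $H_{a_m}$ dual to the edge $\{e,a_m\}$ and the wall of type $b_m$ dual to the edge $\{a_m,a_mb_m\}$. The terminal ray is the translate by $a_mb_m$ of a geodesic ray travelling along $H_{a_m}$, so it crosses only walls whose types occur as labels along such a ray; since $a_m$ and $b_m$ do not commute with each other, neither $a_m$ nor $b_m$ is such a label, and, translation preserving edge-labels, the walls crossed by the terminal ray all have type different from $a_m$ and $b_m$. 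Hence the two families of walls are disjoint, no wall is crossed twice, and $\alpha'$ is a geodesic ray.

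For the divergence I would establish $\Div_{\alpha,\alpha'}\preceq r^{m-1}$ and $r^{m-1}\preceq \Div_{\alpha,\alpha'}$ by imitating the two halves of the proof of Lemma~\ref{bl1}. The one genuinely new feature is that the terminal ray of $\alpha'$ runs parallel not to $H_{a_m}$ itself but to its translate $a_mb_mH_{a_m}$. To handle this I would introduce the isometry $\psi$ of $X_m$ given by left multiplication by $a_mb_m$, which carries $H_{a_m}$ onto $a_mb_mH_{a_m}$ and carries every geodesic ray travelling along $H_{a_m}$ to one travelling along $a_mb_mH_{a_m}$; in particular the terminal ray of $\alpha'$ equals $\psi(\tilde\alpha)$ for some geodesic ray $\tilde\alpha$ that travels along $H_{a_m}$, and $\alpha'$ is the concatenation of the length-$2$ segment from $e$ to $\psi(e)=a_mb_m$ with $\psi(\tilde\alpha)$. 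Applying $\psi^{-1}$ moves the basepoint a bounded distance and turns the translated configuration into the standard pair of rays travelling along $H_{a_m}$. Since a bounded change of basepoint alters the divergence of a pair of rays only up to the equivalence of Convention~\ref{conv}, this normalisation should let me transport both estimates of Lemma~\ref{bl1} through $\psi$ and conclude $\Div_{\alpha,\alpha'}\sim r^{m-1}$. Concretely, for the lower bound I would, as in Lemma~\ref{bl1}, pass to the isometrically embedded copy $X_m\hookrightarrow X_{m+2}$, in which $\alpha$ travels along $H_{a_{m-1}}$, and apply the nested-wall separation argument behind Lemma~\ref{bl0} together with Remark~\ref{r1}; for the upper bound I would transport the explicit efficient connecting paths realising $r^{m-1}$ in Lemma~\ref{bl1} by $\psi$ and patch them near $e$ with a segment of bounded length.

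The main obstacle is exactly the translated wall $a_mb_mH_{a_m}$. Lemma~\ref{bl0} is stated only for rays that emanate from $e$ and travel along an untranslated support, so it cannot be quoted verbatim for $\alpha'$, which first crosses $H_{a_m}$ and the $b_m$-wall before running parallel to the translate. The heart of the argument is therefore to check that normalising by $\psi$, and absorbing the resulting bounded basepoint shift into the relation $\sim$, faithfully reproduces the nested family of walls that forces the lower bound $r^{m-1}$; in particular one must confirm that no connecting path outside the ball can exploit the translate to be shorter than in the untranslated model. Once this bookkeeping is in place, the remaining computations are the same routine estimates as in the proof of Lemma~\ref{bl1}.
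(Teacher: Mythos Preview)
Your normalisation by $\psi=a_mb_m$ does not simplify the configuration. Since $\psi^{-1}$ sends $H_{a_m}$ to $b_ma_mH_{a_m}$ and sends $a_mb_mH_{a_m}$ to $H_{a_m}$, applying $\psi^{-1}$ merely exchanges the roles of the two rays: you again have one ray along $H_{a_m}$ and one ray along a nontrivial translate of $H_{a_m}$, joined by a segment of length~$2$. You never arrive at a ``standard pair of rays travelling along $H_{a_m}$'', and in any case Lemma~\ref{bl1} concerns a pair of rays along $H_{a_m}$ and $H_{b_m}$, not two rays along $H_{a_m}$. Consequently neither Lemma~\ref{bl0} nor Lemma~\ref{bl1} applies to the output of your normalisation, and both your lower and upper bounds are left unproved.

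The idea you are missing is the separating wall $a_mH_{b_m}$ dual to the $b_m$-edge of $\alpha'$. Any path $\gamma$ outside $B(e,2r)$ joining $\alpha$ to $\alpha'$ must cross this hyperplane; the crossing point lies on some geodesic ray $\beta_2$ emanating from $a_mb_m$ that travels along $a_mH_{b_m}$, and the portion of $\gamma$ between that crossing and the endpoint on $\alpha'$ is then a path outside $B(a_mb_m,r)$ joining $\beta_2$ to the terminal ray $\alpha_2$ of $\alpha'$. Now $(\alpha_2,\beta_2)$ is, after translating the basepoint to $a_mb_m$, exactly the configuration of Lemma~\ref{bl1}, giving the lower bound $r^{m-1}$. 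For the upper bound one routes the detour \emph{through} the support of $a_mH_{b_m}$: go from $\alpha$ to a parallel ray $\alpha_1$ from $a_m$, then from $\alpha_1$ to a ray $\beta_1$ along $a_mH_{b_m}$ using Lemma~\ref{bl1} at basepoint $a_m$, cross the $b_m$-edge to $\beta_2$, and then from $\beta_2$ to $\alpha_2$ using Lemma~\ref{bl1} at basepoint $a_mb_m$. Both halves cost $r^{m-1}$ and the patches have bounded length. Thus the $b_m$-wall, not the isometry $\psi$, is what links the problem to Lemma~\ref{bl1}.
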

 
\begin{proof}
It is obvious that each pair of two consecutive generators of $\alpha'$ do not commute. Therefore, $\alpha'$ is a geodesic ray. Let $\alpha_1$ be the geodesic ray emanating from $a_m$ with the same label as $\alpha$. Therefore, the two rays $\alpha$ and $\alpha_1$ both lie in the support of the hyperplane labeled by $a_m$ and the Hausdorff distance between them is exactly 1. Let $\alpha_2$ be a sub-ray of $\alpha'$ with the initial point $a_mb_m$. Let $\beta_1$ and $\beta_2$ be two arbitrary geodesic rays in the support of the hyperplane labeled by $b_m$ such that the initial point of $\beta_1$ is $a_m$ and the initial point of $\beta_2$ is $a_mb_m$. By Lemma~\ref{bl1}, the divergence functions of the two pairs $(\alpha_1, \beta_1)$ and $(\alpha_2, \beta_2)$ are both equivalent to $r^{m-1}$. 

We now prove that the divergence of the pair $(\alpha,\alpha')$ dominates $r^{m-1}$. Let $r$ be an arbitrary number greater than 2. Let $\gamma$ be an arbitrary path outside $B(e,2r)$ connecting $\alpha(2r)$ and $\alpha'(2r)$. Obviously, $\gamma$ also lies outside $B(a_mb_m,r)$. We see that the path $\gamma$ must cross $a_mH_{b_m}$. Thus, the path $\gamma$ must cross some geodesic rays $\beta_2$ emanating from $a_mb_m$ that travels along $a_mH_{b_m}$. Therefore, a subpath of $\gamma$ connects two points on the geodesic rays $\alpha_2$, $\beta_2$ and it lies outside $B(a_mb_m,r)$. This implies that the length of $\gamma$ is at least $\Div_{\alpha_2,\beta_2}(r)$ by Remark \ref{r1}. Therefore, the divergence function $\Div_{\alpha,\alpha'}(2r)$ must be greater than $\Div_{\alpha_2,\beta_2}(r)$. Also, the divergence function $\Div_{\alpha_2,\beta_2}$ is equivalent to $r^{m-1}$ by the above observation. Thus, the divergence of the pair $(\alpha,\alpha')$ must dominate $r^{m-1}$.

We will finish the proof by showing that the divergence of the pair $(\alpha,\alpha')$ is dominated by $r^{m-1}$. Let $r$ be an arbitrary number greater than 2. Let $\beta_1$ and $\beta_2$ be two geodesic rays with the same labels in the support of the hyperplane labeled by $b_m$ such that the initial point of $\beta_1$ is $a_m$ and the initial point of $\beta_2$ is $a_mb_m$. Thus, we can connect $\beta_1(2r)$ and $\beta_2(2r)$ by an segment $\gamma_1$ of length 1. Similarly, we can connect $\alpha(2r)$ and $\alpha_1(2r)$ by an segment $\gamma_2$ of length 1. Let $\eta_1$ be a path outside $B(a_m,2r)$ connecting $\alpha_1(2r)$ and $\beta_1(2r)$ with length at most $\Div_{\alpha_1,\beta_1}(2r)+1$. Let $\eta_2$ be a path outside $B(a_mb_m,2r)$ connecting $\alpha_2(2r)$ and $\beta_2(2r)$ with length at most $\Div_{\alpha_2,\beta_2}(2r)+1$. Let $\ell_1$ be the subsegment of $\alpha$ connecting $\alpha(r)$ and $\alpha(2r)$. Let $\ell_2$ be the subsegment of $\alpha$ connecting $\alpha_2(2r)$ and $\alpha_2(r-2)=\alpha'(r)$. Concatenating the paths $\ell_1$, $\gamma_2$, $\eta_1$, $\gamma_1$, $\eta_2$, and $\ell_2$, we have a path outside $B(e,r)$ connecting $\alpha(r)$ and $\alpha'(r)$ with length at most $\Div_{\alpha_1,\beta_1}(2r)+\Div_{\alpha_2,\beta_2}(2r)+2r+6$. Therefore, the divergence function of the pair $(\alpha,\alpha')$ is also at most $\Div_{\alpha_1,\beta_1}(2r)+\Div_{\alpha_2,\beta_2}(2r)+2r+6$. Since both divergence functions $\Div_{\alpha_1,\beta_1}$ and $\Div_{\alpha_2,\beta_2}$ are both equivalent to $r^{m-1}$, the divergence function of the pair $(\alpha,\alpha')$ is dominated by $r^{m-1}$.
\end{proof}

\begin{lem}
\label{bl3}

Let $m\geq 3$ be an integer. In the $\CAT(0)$ space $X_m$, let $\alpha$ be an arbitrary geodesic ray emanating from $e$ that travels along $H_{a_m}$ and let $\alpha'$ be a path emanating from $e$ consisting of a geodesic segment labeled $a_mb_2$ followed by an arbitrary geodesic ray emanating from $a_mb_2$ that travels along $a_mb_2H_{a_m}$. Then $\alpha'$ is a geodesic ray and the divergence of the pair $(\alpha,\alpha')$ is linear. Moreover, the union of the two rays $\alpha$ and $\alpha'$ is a quasi-geodesic. 
\end{lem}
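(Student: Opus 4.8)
The plan is to follow the architecture of the proof of Lemma~\ref{bl2}, replacing the high-level generator $b_m$ by the low-level generator $b_2$ and tracking how this single change turns the estimate $r^{m-1}$ into a linear one. The geodesic claim is routine and I would dispose of it first exactly as in Lemma~\ref{bl2}: the two generators of the initial syllable $a_mb_2$ do not commute, and the first generator of the ray along $a_mb_2H_{a_m}$ does not commute with $b_2$, so every pair of consecutive generators of $\alpha'$ fails to commute and $\alpha'$ is a geodesic ray (Theorem~3.4.2 of \cite{MR2360474}).

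For the divergence I would establish the matching upper and lower bounds separately. For the upper bound $\Div_{\alpha,\alpha'}\preceq r$ I would reuse the concatenation from Lemma~\ref{bl2} essentially verbatim: let $\alpha_1$ be the ray from $a_m$ with the same label as $\alpha$ (so at Hausdorff distance $1$ from $\alpha$), let $\alpha_2$ be the sub-ray of $\alpha'$ based at $a_mb_2$, and let $\beta_1,\beta_2$ be rays in the support of the hyperplane $a_mH_{b_2}$ based at $a_m$ and $a_mb_2$. Concatenating a segment of $\alpha$, a length-one hop onto $\alpha_1$, a shortest path realizing $\Div_{\alpha_1,\beta_1}(2r)$, a length-one hop from $\beta_1$ to $\beta_2$, a shortest path realizing $\Div_{\alpha_2,\beta_2}(2r)$, and a return segment of $\alpha'$ produces a path outside $\ball{e}{r}$ joining $\alpha(r)$ and $\alpha'(r)$ of length $\Div_{\alpha_1,\beta_1}(2r)+\Div_{\alpha_2,\beta_2}(2r)+O(r)$.

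Everything therefore reduces to the one new ingredient, which is where I expect the real work to lie: the analogue of Lemma~\ref{bl1} for $b_2$, namely that the divergence of a ray along $H_{a_m}$ and a ray along $H_{b_2}$ from a common point is linear rather than $r^{m-1}$ (the pairs $(\alpha_1,\beta_1)$ and $(\alpha_2,\beta_2)$ are translates of this configuration by homogeneity). To see this I would observe that such a pair of rays is labelled by generators of the form $a_0,b_0$ and $a_1,b_1$, which I expect to span a full subgraph of $\Gamma_m$ that is a four-cycle, hence a nontrivial join. The corresponding sub-Davis-complex then embeds isometrically in $X_m$ (as recorded in the remark following the definition of hyperplanes) and is the Davis complex of a direct product of two infinite dihedral groups; inside it the two rays are axes pointing in the two perpendicular flat directions, so the divergence of the pair is linear. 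Transferring this upper bound through the isometric embedding, exactly as in the final paragraph of the proof of Lemma~\ref{bl1}, gives $\Div_{\alpha_1,\beta_1}\preceq r$ and $\Div_{\alpha_2,\beta_2}\preceq r$; the concatenation above then has linear length, so $\Div_{\alpha,\alpha'}\preceq r$. Checking that $\{a_0,b_0,a_1,b_1\}$ really spans such a join in Figure~\ref{asecond}, and identifying the resulting flat, is the step I expect to require the most care.

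Finally I would establish the quasi-geodesic assertion, which simultaneously supplies the matching linear lower bound. Reading the label of $\alpha\cup\alpha'$ across $e$ one gets a word of the form $\cdots b_0a_0\mid a_mb_2\mid a_0b_0\cdots$; since $b_2$ fails to commute with at least one of the generators $a_0,b_0$ occurring in the rays along $H_{a_m}$, no cancellation can propagate across the central syllable $a_mb_2$, so the word representing $\alpha(s)^{-1}\alpha'(t)$ stays reduced up to bounded defect and its length is comparable to $s+t$. Hence $\alpha\cup\alpha'$ is a quasi-geodesic and $d\bigl(\alpha(r),\alpha'(r)\bigr)\succeq r$; combined with Remark~\ref{r1} this forces $\Div_{\alpha,\alpha'}\succeq r$, and together with the upper bound the divergence of the pair $(\alpha,\alpha')$ is linear.
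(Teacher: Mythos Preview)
Your proposal is correct and takes essentially the same approach as the paper: the key step is precisely that the pairs $(\alpha_i,\beta_i)$ lie in a translate of the Davis complex for the subgroup on $\{a_0,b_0,a_1,b_1\}$, which the paper phrases as ``one-ended virtually abelian'' and you phrase as a direct product of two infinite dihedral groups, yielding linear divergence. The only notable difference is in the quasi-geodesic step: instead of your word-cancellation analysis, the paper simply observes that the path $\alpha_3=\alpha_1\cup[\text{edge }b_2]\cup\alpha_2$ based at $a_m$ is an honest geodesic (every pair of consecutive labels fails to commute), and $\alpha\cup\alpha'$ lies at Hausdorff distance~$1$ from it. One small caution on your version: you need that $b_2$ commutes with \emph{neither} $a_0$ nor $b_0$, not merely ``at least one,'' to prevent cancellation across the syllable.
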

 
\begin{proof}
We observe that each pair of two consecutive generators of $\alpha'$ do not commute. Thus, $\alpha'$ is a geodesic ray. Let $\alpha_1$ be an arbitrary geodesic ray emanating from $a_m$ with the same label as $\alpha$. Therefore, the two rays $\alpha$ and $\alpha_1$ both lie in the support of the hyperplane labeled by $a_m$ and the Hausdorff distance between them is exactly 1. Let $\alpha_2$ be a sub-ray of $\alpha'$ with the initial point $a_mb_2$. Let $\beta_1$ and $\beta_2$ be two geodesic rays labeled by $b_1a_1b_1a_1\cdots$ such that the initial point of $\beta_1$ is $a_m$ and the initial point of $\beta_2$ is $a_mb_2$. Thus, the two rays $\beta_1$ and $\beta_2$ both lie in the support of the hyperplane labeled by $b_2$ and the Hausdorff distance between them is exactly 1. Since the subgroup generated by $\{a_0,b_0,a_1,b_1\}$ is one-ended virtually abelian, the associated Davis complex $\Sigma'$ has linear divergence. Also the pair of rays $(\alpha_1, \beta_1)$ lies in some translation of $\Sigma'$, therefore the divergence of this pair is at most linear. Similarly, the divergence of the pair $(\alpha_2, \beta_2)$ is also at most linear. Therefore, the divergence of the pair $(\alpha,\alpha')$ is dominated by a linear function.

Let $\alpha_3$ be a union of the ray $\alpha_1$, the edge $b_2$, and the ray $\alpha_2$. We observe that each pair of two consecutive generators of $\alpha_3$ do not commute. Therefore, $\alpha_3$ is a geodesic ray. Therefore, it is not hard to see the union of the two rays $\alpha$ and $\alpha'$ is a quasi-geodesic. 
\end{proof}

\begin{prop}
For each integer $m\geq 3$ and real number $t>1$, the divergence $\Div_{\gamma_{t,m}} \preceq r^{m-1+\frac{1}{t}}$ in $X_m$.
\end{prop}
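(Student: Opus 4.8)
The plan is to establish the upper bound by exhibiting, for every large $r$, a short detour joining $\gamma_{t,m}(r)$ and $\gamma_{t,m}(-r)$ outside $B(e,r)$, and bounding its length by a constant multiple of $(Cr)^{m-1+\frac{1}{t}}$. Write $\gamma^{+}$ and $\gamma^{-}$ for the two rays issuing from $e=\gamma_{t,m}(0)$. The guiding observation is that $\gamma_{t,m}$ travels along $H_{a_m}$ and is assembled from two kinds of elementary steps: the \emph{expensive} turns labeled $a_mb_m$ (one at the head of each block $w_i$), whose divergence cost is controlled by Lemma~\ref{bl2}, and the \emph{cheap} runs labeled $(a_mb_2)^{\lfloor i^{t-1}\rfloor}$, which by Lemma~\ref{bl3} stay quasigeodesically along $H_{a_m}$ and cost only linearly. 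The first step is to count the expensive turns inside the ball. The arc length of $\gamma^{+}$ through its first $n$ blocks is
\[
L(n)=\sum_{i=1}^{n}\bigl(2+2\lfloor i^{t-1}\rfloor\bigr)=2n+2f_t(n),
\]
so the inequalities $h_t n^t\le f_t(n)\le n^t$ (valid for $n>n_t$) show that $\gamma^{+}(r)$ lies in a block $w_N$ with $N$ of order $r^{1/t}$. Thus $\gamma^{+}$ meets of order $r^{1/t}$ expensive turns and of order $r$ cheap blocks inside $B(e,r)$, while $\gamma^{-}$ meets no expensive turn at all, being a single run of $a_mb_2$'s of length of order $r$.

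Next I would build the detour by straightening each ray, one block at a time, onto a common geodesic along $H_{a_m}$. Concretely, I would interpolate between $\gamma^{+}$ and a reference ray $\rho^{+}$ that travels straight along $H_{a_m}$ (the ray labeled $a_0b_0a_0b_0\cdots$ sitting inside the flat subcomplex $\Sigma'$ appearing in the proof of Lemma~\ref{bl3}) through a finite chain of rays, each consecutive pair differing by exactly one block. When the removed block is an $a_mb_m$ turn, Lemma~\ref{bl2} joins the two rays past the ball at cost of order $r^{m-1}$; when it is an $a_mb_2$ block, Lemma~\ref{bl3} does so at linear cost. Straightening $\gamma^{-}$ onto the opposite reference ray $\rho^{-}$ proceeds the same way, using only Lemma~\ref{bl3} since $\gamma^{-}$ is purely cheap. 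Finally $\rho^{+}(r)$ and $\rho^{-}(r)$ lie on a single bi-infinite geodesic inside the flat $\Sigma'$, whose divergence is linear, so they are joined outside $B(e,r)$ by a path of linear length.

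Adding the contributions gives a detour whose length is bounded above by a constant multiple of
\[
N\,r^{m-1}+ (\text{number of cheap blocks})\cdot r + r .
\]
Since $N$ is of order $r^{1/t}$ and the number of cheap blocks inside $B(e,r)$ is of order $r$, this is of order $r^{m-1+\frac{1}{t}}+r^{2}+r$. Because $m\ge 3$ forces $m-1+\tfrac{1}{t}>2$, the first term dominates, and therefore $\Div_{\gamma_{t,m}}\preceq r^{m-1+\frac{1}{t}}$ in $X_m$, as claimed.

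The main obstacle is the base-point bookkeeping hidden in the interpolation step. Lemmas~\ref{bl2} and~\ref{bl3} measure divergence from the point $p$ where the two rays separate, whereas my detour must avoid $B(e,r)$ rather than a ball centered at $p$. Since $p$ lies at distance $d<r$ from $e$, one has $B(e,r)\subseteq B(p,r+d)\subseteq B(p,2r)$, so I would run each elementary detour at radius of order $2r$ about $p$; the cost then becomes of order $(2r)^{m-1}$, still comparable to $r^{m-1}$ because the definition of $\preceq$ permits a multiplicative constant inside the argument. The delicate point is to arrange that every partial detour genuinely stays outside $B(e,r)$ and that the pieces concatenate into a single path: for this I would first push each ray outward along $H_{a_m}$ to radius of order $2r$, perform the elementary detour there, and return, checking that these radial excursions remain outside $B(e,r)$ and add only linear length per step (hence at most order $r^{2}$ in total, again dominated). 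Controlling these excursions, and verifying that the quasigeodesic property from Lemma~\ref{bl3} keeps the cheap runs uniformly close to $H_{a_m}$ throughout, is the technical heart of the argument.
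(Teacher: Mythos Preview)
Your proposal is correct and follows essentially the same strategy as the paper. Both arguments hop along $\gamma_{t,m}$ two letters at a time, at each vertex $s_i=\gamma_{t,m}(2i)$ sending out a ray $u_i$ along $s_iH_{a_m}$, and then splice consecutive $u_i$'s using Lemma~\ref{bl2} when the step is $a_mb_m$ (cost $\asymp r^{m-1}$) and Lemma~\ref{bl3} when it is $a_mb_2$ (linear cost); the count $N\asymp r^{1/t}$ for the expensive steps and $\asymp r$ for the cheap ones yields the bound $r^{m-1+1/t}+r^2$.

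The only real difference is the base-point bookkeeping. The paper avoids your ``out-and-back'' radial excursions by choosing one fixed radius $8f_t(n)$ (with $f_t(n)\asymp r$), noting $d(e,s_i)\le 4f_t(n)$, and performing every elementary detour at that radius: since $m_i\bigl(8f_t(n)\bigr)=u_{i+1}\bigl(8f_t(n)-2\bigr)$, consecutive pieces are joined by length-$2$ segments and the concatenation automatically stays outside $B(e,r)$. Your excursion scheme also works and gives the same order, but the paper's uniform-radius device is cleaner and makes the verification that everything avoids $B(e,r)$ immediate.
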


\begin{proof}
For each number $r$ large enough, we can choose an integer $n>n_t$ such that
\[r\leq h_t n^t \leq f_t(n) \leq n^t\leq \biggl(\frac{2}{h_t}\biggr)r.\]

Let $x=w_1 w_2 w_3 w_4\cdots w_n$. Then
\begin{align*}
\abs{x} &= 2\bigl((\lfloor 1^{t-1}\rfloor+1)+(\lfloor 2^{t-1}\rfloor+1)+(\lfloor 3^{t-1}\rfloor+1)+(\lfloor 4^{t-1}\rfloor+1)+(\lfloor 5^{t-1}\rfloor+1)+\cdots\\&\cdots+(\lfloor n^{t-1}\rfloor+1)\bigr)=2n+2f_t(n).
\end{align*}
Thus, $2 f_t(n) \leq \abs{x} \leq 4 f_t(n)$. Therefore, we can connect $x$ and $\gamma_{t,m}(r)$ by a path $\beta_1$ outside $B(e,r)$ such that
\[\ell(\beta_1)\leq 4 f_t(n)-r\leq \biggl(\frac{8}{h_t}-1\biggr)r.\] 

We now try to connect $\gamma_{t,m}(-r)$ and $x$ by a path $\beta_2$ outside $B(e,r)$ such that $\ell(\beta_2)\leq M r^{m-1+\frac{1}{t}}$ for some constant $M$ not depending on $r$ and which completes the proof of the proposition. 

Let $k=t-1$, $\ell=\abs{x}/2$, and $s_i=\gamma_{t,m}(2i)$ for each $0\leq i\leq \ell$. Recall that $H_{a_m}$ is the support of the hyperplane that crosses the edge $a_m$ with one endpoint $e$. For $0\leq i \leq \ell$, let $u_i$ be a geodesic ray which runs along the support $s_iH_{a_m}$ with the initial point $s_i$. We can choose $u_i$ such that they have the same label for all $i$. Obviously, $s_i$ and $s_{i+1}$ are endpoints of the subsegment $v_i$ of $\gamma_{t,m}$ labeled by $a_mb_m$ or $a_mb_2$ for $0\leq i \leq \ell$-1. Let $T_1$ be the set of indices $i$ such that $v_i$ is labeled by $a_mb_m$ and $T_2$ be the set of indices $i$ such that $v_i$ is labeled by $a_mb_2$. Since each $w_i$ only contains one subword labeled by $a_mb_m$, $T_1$ contains $n$ elements and $T_2$ contains $(l-n)$ elements. For $0\leq i\leq \ell-1$, let $m_i$ be a geodesic with the initial point $s_i$ which runs along $v_i$ followed by $u_{i+1}$. (The fact that $m_i$ is a geodesic is guaranteed by Lemmas~\ref{bl2} and~\ref{bl3}.) 

For each $i$ in $T_1$, we can connect $u_i\bigl(8f_t(n)\bigr)$ and $m_i\bigl(8f_t(n)\bigr)$ by a path $\eta_i$ outside $B\bigl(s_i,8f_t(n)\bigr)$ with length bounded above by $M_1\bigl(f_t(n)\bigr)^{m-1}+N_1$ for some constants $M_1$ and $N_1$ not depending on $r$ and $n$ by Lemma~\ref{bl2}. For each $i$ in $T_2$, we can connect $u_i\bigl(8f_t(n)\bigr)$ and $m_i\bigl(8f_t(n)\bigr)$ by a path $\eta_i$ outside $B\bigl(s_i,8f_t(n)\bigr)$ with length bounded above by $M_2f_t(n)+N_2$ for some constants $M_2$ and $N_2$ not depending on $r$ and $n$ by Lemma~\ref{bl3}. Since the distance between $e$ and $s_i$ is bounded above by $4 f_t(n)$ and $f_t(n)\geq r$, each $\eta_i$ also lies outside $B\bigl(e,r)$ for $0\leq i\leq \ell-1$. Let $\eta_{\ell}$ be a subsegment of $u_{\ell}$ connecting $x$ and $u_{\ell}\bigl(8f_t(n)\bigr)$. Moreover, the ray $\sigma$ with the initial point $e$ which runs along a geodesic segment between $e$ and $s_{\ell}$ followed by $u_{\ell}$ is a geodesic since each pair of consecutive edges of $\sigma_i$ are labeled by two group generators which do not commute. Therefore, $\eta_{\ell}$ lies outside $B(e,r)$. 

For $0\leq i\leq \ell-1$, we have $m_i\bigl(8f_t(n)\bigr)=u_{i+1}\bigl(8f_t(n)-2\bigr)$. Thus, we can connect $m_i\bigl(8f_t(n)\bigr)$ and $u_{i+1}\bigl(8f_t(n)\bigr)$ by a path $\eta'_i$ with length 2. Obviously, $\eta'_i$ lies outside $B(e,r)$.

Let $\eta=(\eta_0\eta'_0)(\eta_1\eta'_1)(\eta_2\eta'_2)\cdots(\eta_{\ell-1}\eta'_{\ell-1})\eta_{\ell}$. Thus, $\eta$ is a path outside $B(e,r)$ connecting $u_0\bigl(8f_t(n)\bigr)$ and $x$. Moreover,
\[\ell(\eta)\leq n\biggl(M_1\bigl(f_t(n)\bigr)^{m-1}+N_1+2\biggr)+(\ell-n)\biggl(M_2f_t(n)+N_2+2\biggr)+8f_t(n). \]
It follows that there is some constant $M_3$ not depending on $r$ and $n$, such that the length of $\eta$ is bounded above by $M_3n^{(m-1)t+1}$. Therefore, there is some constant $M_4$ not depending on $r$ and $n$, such that the length of $\eta$ is bounded above by $M_4r^{(m-1)+\frac{1}{t}}$ 
 
By Lemma~\ref{bl3} and the similar argument as above, we can connect $\gamma_{t,m}(-r)$ and $u_0\bigl(8f_t(n)\bigr)$ by a path $\alpha$ outside $B(e,r)$ with length bounded above by $M_5 r^2+N_5$ for some constants $M_5$ and $N_5$ not depending on $r$. Let $\beta_2= \alpha\eta$. Then $\beta_2$ lies outside $B(e,r)$ and connects $\gamma_{t,m}(-r)$ and $x$. Moreover, the length of $\beta_2$ is bounded above by $M r^{m-1+\frac{1}{t}}$ for some constant $M$ not depending on $r$.
\end{proof}

\begin{prop}
For each integer $m\geq 3$ and real number $t>1$, we have $r^{m-1+\frac{1}{t}} \preceq Div_{\gamma_{t,m}}$ in $X_m$.
\end{prop}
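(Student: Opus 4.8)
The plan is to bound below the length of an arbitrary path $\eta$ lying outside $B(e,r)$ and joining $\gamma_{t,m}(r)$ to $\gamma_{t,m}(-r)$, by exhibiting roughly $r^{1/t}$ pairwise disjoint subarcs of $\eta$, each of which is forced to have length comparable to $r^{m-1}$; multiplying the two quantities yields the desired lower bound $r^{m-1+\frac1t}$. First I would fix a large $r$ and let $n$ be the largest integer with $d(e,P_n a_m)\le r/2$, where $P_i=w_1\cdots w_{i-1}$ is the vertex of $\gamma_{t,m}$ immediately preceding the subword $a_mb_m$ of $w_i$ (so $P_1=e$). Since $\abs{P_i}=2(i-1)+2f_t(i-1)$ and $h_t i^t\le f_t(i)\le i^t$ for $i>n_t$, this choice forces $n$ to be comparable to $r^{1/t}$. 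For each $i\le n$ let $L_i$ and $K_i$ be the hyperplanes dual, respectively, to the edge labelled $a_m$ and the edge labelled $b_m$ of the subword $a_mb_m$ of $w_i$; both of these hyperplanes meet the vertex $P_i a_m$, and every crossing occurs at a $\gamma_{t,m}$--parameter strictly between $0$ and $r$.

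The two structural facts I would establish next are that the $2n$ hyperplanes $L_1,K_1,\dots,L_n,K_n$ are pairwise disjoint and that the halfspaces containing $\gamma_{t,m}(r)$ form a strictly nested chain. Disjointness follows from the Remark that two intersecting hyperplanes have commuting types: hyperplanes of equal type never cross, while $L_i$ and $K_j$ cannot cross because $a_m$ and $b_m$ do not commute. Since $\gamma_{t,m}$ crosses each of these hyperplanes exactly once, in the order $L_1,K_1,L_2,K_2,\dots,L_n,K_n$, and since both $e$ and $\gamma_{t,m}(-r)$ lie on the side opposite to $\gamma_{t,m}(r)$ for every one of them, disjointness upgrades to the nesting $H(L_1)^+\supset H(K_1)^+\supset\cdots\supset H(L_n)^+\supset H(K_n)^+$. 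Consequently the last times at which $\eta$ lies in each of these halfspaces occur in the order $t(K_n)\le t(L_n)\le t(K_{n-1})\le\cdots\le t(K_1)\le t(L_1)$, so the subarcs $\eta_i:=\eta|_{[t(K_i),t(L_i)]}$ are pairwise disjoint; moreover $\eta_i$ begins on the support of $K_i$ and ends on the support of $L_i$.

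It then remains to show $\ell(\eta_i)$ is bounded below by a uniform constant times $r^{m-1}$. Here I would use that $d(e,P_i a_m)\le r/2$, whence $B(P_i a_m, r/2)\subseteq B(e,r)$ and so every $\eta_i$ avoids $B(P_i a_m, r/2)$ simultaneously. Choosing a geodesic ray along the support of $L_i$ through the terminal point of $\eta_i$ and a geodesic ray along the support of $K_i$ through its initial point, both emanating from $P_i a_m$, Lemma~\ref{bl1} applied to the group translate by $P_i a_m$ shows that the divergence of this pair is equivalent to $r^{m-1}$, and Remark~\ref{r1} then gives $\ell(\eta_i)\ge c\,(r/2)^{m-1}$ for a constant $c$ independent of $i$ and $r$. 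Summing over the $n$ disjoint arcs yields $\ell(\eta)\ge n\,c\,(r/2)^{m-1}$, which is comparable to $r^{1/t}\cdot r^{m-1}=r^{m-1+\frac1t}$, and hence $r^{m-1+\frac1t}\preceq \Div_{\gamma_{t,m}}$. I expect the delicate point to be the second step: verifying that the nested-halfspace structure genuinely forces the disjoint subarcs $\eta_i$ to run from the support of $K_i$ to the support of $L_i$, so that Lemma~\ref{bl1} and Remark~\ref{r1} apply with the common basepoint $P_i a_m$, together with the simultaneous ball containment $B(P_i a_m, r/2)\subseteq B(e,r)$. The genuinely new ingredient compared with the integer-exponent computations of Dani--Thomas is the counting $n$ comparable to $r^{1/t}$: the increasingly long $a_mb_2$ spacers separating consecutive $a_mb_m$ blocks thin out the expensive hyperplanes, so that only about $r^{1/t}$ of them lie within distance $r/2$ of $e$.
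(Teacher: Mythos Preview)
Your proposal is correct and follows essentially the same strategy as the paper's proof: locate roughly $r^{1/t}$ of the ``expensive'' $a_mb_m$ blocks within distance comparable to $r$ of $e$, use the associated hyperplanes to cut $\eta$ into disjoint subarcs, and bound each subarc below by $cr^{m-1}$ via the known divergence estimates and Remark~\ref{r1}. The only differences are cosmetic: you pair the $a_m$--hyperplane with the adjacent $b_m$--hyperplane and invoke Lemma~\ref{bl1} at the common vertex $P_ia_m$, whereas the paper pairs the $a_m$--hyperplane before the $a_mb_m$ block with the $a_m$--hyperplane after it and invokes Lemma~\ref{bl2} at the vertex $s_i$; and you make the disjointness of the subarcs explicit via nested halfspaces and last-exit times, whereas the paper uses first-crossing times (which are monotone for the same nesting reason) without spelling this out.
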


\begin{proof}
For each number $r$ large enough, we can choose an integer $n>n_t$ such that
\[r\leq 10h_t n^t \leq 10f_t(n) \leq 10n^t\leq \biggl(\frac{2}{h_t}\biggr)r.\]

Let $\eta$ be any path outside $B(e,r)$ connecting $\gamma_{t,m}(-r)$ and $\gamma_{t,m}(r)$. Since $\gamma_{t,m}$ restricted to $[-r,r]$ is a geodesic and $\eta$ is a path with the same endpoints, $\eta$ must cross each hyperplane crossed by $\gamma_{t,m}\bigl([-r,r]\bigr)$ at least once. Let $s_0=e$ and $s_i=w_1 w_2 w_3 w_4\cdots w_i$ for $1\leq i\leq \lfloor h_tn\rfloor$. Thus, 
\begin{align*}
\abs{s_i}&= 2\bigl((\lfloor 1^{t-1}\rfloor+1)+(\lfloor 2^{t-1}\rfloor+1)+(\lfloor 3^{t-1}\rfloor+1)+(\lfloor 4^{t-1}\rfloor+1)+\cdots\\&\cdots+(\lfloor i^{t-1}\rfloor+1)\bigr)=2i+2f_t(i)\leq 4i^t.
\end{align*}

Recall that $H_{a_m}$ is the support of the hyperplane that crosses the edge $a_m$ with one endpoint $e$. For $0\leq i\leq \lfloor h_tn\rfloor$, it is not hard to see $\abs{s_i}\leq r$. Thus, the path $\eta$ must cross $s_iH_{a_m}$ for $0\leq i\leq \lfloor h_tn\rfloor$. Let $g_i$ be the point at which $\eta$ first crosses $s_iH_{a_m}$, where $g_i$ lies in the component of the complement of the hyperplane in $s_iH_{a_m}$ containing $e$. Let $u_i$ denote the geodesic connecting $s_i$ and $g_i$ which runs along $s_iH_{a_m}$. Similarly, for $0\leq i\leq \lfloor h_tn\rfloor$, let $h_i$ be the point at which $\eta$ first crosses $s_ia_mb_mH_{a_m}$, where $h_i$ lies in the component of the complement of the hyperplane in $s_ia_mb_mH_{a_m}$ containing $e$. Let $v_i$ denote the geodesic connecting $s_ia_mb_m$ and $h_i$ which runs along $s_ia_mb_mH_{c}$. For $0\leq i\leq \lfloor h_tn\rfloor$, let $\eta_i$ be a subsegment of $\eta$ connecting $g_i$ and $h_{i}$. Let $m_i$ be a geodesic with the initial point $s_i$ which runs along $a_mb_m$ followed by $v_i$. (The fact that $m_i$ is a geodesic is guaranteed by Lemma~\ref{bl3}.) Since
\[d(g_i,s_i)\geq d(g_i,e)-d(s_i,e)\geq r-4i^t\geq 5h_tn^t-4(h_tn)^t\geq h_tn^t\]
and
\[d(h_i,s_i)\geq d(h_i,e)-d(s_i,e)\geq r-4i^t\geq 5h_tn^t-4(h_tn)^t\geq h_tn^t,\]
we have \[\ell(\eta_i)\geq M_1(h_tn^t)^{m-1}-N_1\] for some constants $M_1$ and $N_1$ not depending on $r$ by Lemma~\ref{bl2} and Remark \ref{r1}.
Thus, \[\ell(\eta)\geq \bigl(M_1(h_tn^t)^{m-1}-N_1\bigr)(h_tn-1)\geq M r^{m-1+\frac{1}{t}}\] for some constant $M$ not depending on $r$, which proves the proposition.
\end{proof}

Before showing that each bi-infinite geodesic $\gamma_{t,m}$ is Morse, we would like to mention the concept of lower divergence as follows. The\emph{ lower divergence} of a bi-infinite ray $\gamma$ in a one-ended geodesic space, denoted $\ldiv_{\gamma}$, is the function $h: (0,\infty)\to(0,\infty)$ defined by $h(r)=~ \inf_{t}\rho_{\gamma}(r,t)$, where $\rho_{\gamma}(r,t)$ is the infimum of the lengths of all paths from $\gamma(t-r)$ to $\gamma(t + r)$ which lie outside the open ball of radius $r$ about $\gamma(t)$.

The following proposition characterize Morse geodesics by using the concept of lower divergence.

\begin{prop}[Theorem 2.14, \cite{MR3339446}]
\label{CS}
Let $X$ be a $\CAT(0)$ space and $\gamma \subset X$ a bi-infinite geodesic. Then the bi-infinite geodesic in $X$ is Morse iff it has superlinear divergence.
\end{prop}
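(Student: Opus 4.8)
The plan is to prove the stronger three-way equivalence for a bi-infinite geodesic $\gamma$ in a $\CAT(0)$ space $X$: that $\gamma$ is Morse, that $\gamma$ is \emph{strongly contracting}, and that $\ldiv_{\gamma}$ is superlinear are mutually equivalent. Here $\gamma$ is strongly contracting if there is a constant $C>0$ such that every metric ball $B$ disjoint from $\gamma$ has nearest-point projection $\pi_{\gamma}(B)$ of diameter at most $C$. Strong contraction is the natural bridge: in a $\CAT(0)$ space it is the geometric reformulation of the Morse property, and it translates directly into a lower bound on divergence. I would therefore establish the two equivalences ``$\gamma$ Morse $\iff$ $\gamma$ strongly contracting'' and ``$\gamma$ strongly contracting $\iff$ $\ldiv_{\gamma}$ superlinear''.

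For the implication strongly contracting $\Rightarrow$ superlinear lower divergence I would in fact prove the quadratic bound $\ldiv_{\gamma}(r)\succeq r^{2}$. The key elementary fact, immediate from the definition, is that if $\gamma$ is $C$-strongly contracting and $d(x,y)\leq d(x,\gamma)$ then $d\bigl(\pi_{\gamma}(x),\pi_{\gamma}(y)\bigr)\leq C$ (apply the contraction hypothesis to the ball $B(x,d(x,\gamma))$, which is disjoint from $\gamma$ and contains $x$ and $y$). Now fix $t$ and let $\sigma$ be any path from $\gamma(t-r)$ to $\gamma(t+r)$ lying outside $B(\gamma(t),r)$. Let $\sigma'$ be the portion of $\sigma$ whose $\pi_{\gamma}$-image lands in the middle subsegment between $\gamma(t-r/2)$ and $\gamma(t+r/2)$; for any such point $x$ the triangle inequality together with $d(x,\gamma(t))\geq r$ yields $d(x,\gamma)\geq r/2$. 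Subdividing $\sigma'$ into arcs of length $r/2$, each arc contributes a projection displacement of at most $C$ by the key fact (its chord is at most $r/2\leq d(x,\gamma)$), yet the projections must sweep a length at least $r$ along $\gamma$; hence the number of arcs, namely roughly $2\ell(\sigma')/r$, is at least $r/C$, forcing $\ell(\sigma)\geq\ell(\sigma')\succeq r^{2}/C$. Since the estimate is uniform in $t$, we conclude $\ldiv_{\gamma}(r)\succeq r^{2}$, which is superlinear.

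The remaining implications I would route through the contracting condition as well. The equivalence ``Morse $\iff$ strongly contracting'' is the standard $\CAT(0)$ dichotomy: strong contraction controls the nearest-point projection of any $(K,L)$--quasi-geodesic with endpoints on $\gamma$, giving a Morse-lemma-style fellow-travelling bound and hence the Morse property, while conversely a geodesic that is not strongly contracting admits balls whose projections to $\gamma$ have arbitrarily large diameter, and in a $\CAT(0)$ space these fat projections can be spliced into genuine $(K,L)$--quasi-geodesics with endpoints on $\gamma$ that escape every neighborhood, so $\gamma$ fails to be Morse. Finally, to close the loop I would show that if $\gamma$ is \emph{not} strongly contracting then $\ldiv_{\gamma}$ is \emph{not} superlinear: a ball $B(x,\rho)$ disjoint from $\gamma$ with large projection diameter $D$ provides points $y_{1},y_{2}\in B$ projecting to $\gamma(a),\gamma(b)$ with $b-a\approx D$, and the detour $\gamma(a)\to y_{1}\to y_{2}\to\gamma(b)$ (of length comparable to $d(x,\gamma)+\rho$) avoids a ball of radius $\approx D/2$ about $\gamma\bigl((a+b)/2\bigr)$ by $\CAT(0)$ convexity; arranging such configurations at unboundedly large values of $D$ yields a cofinal sequence of scales at which $\ldiv_{\gamma}$ grows only linearly, contradicting superlinearity.

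I expect the main obstacle to lie in the $\CAT(0)$ geometry of the equivalence ``Morse $\iff$ strongly contracting'', and in particular in the direction not strongly contracting $\Rightarrow$ not Morse: converting the mere existence of disjoint balls with fat projections into an honest family of $(K,L)$--quasi-geodesics that provably stray arbitrarily far from $\gamma$ requires careful use of $\CAT(0)$ convexity to synthesize and control the paths. The companion point in the last paragraph—guaranteeing that the failure of contraction is witnessed at arbitrarily large scales with detours of cost comparable to the scale—requires the same geometric input. The divergence estimates themselves, by contrast, reduce to the single contraction inequality above together with routine triangle-inequality bookkeeping.
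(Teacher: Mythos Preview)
The paper does not prove this proposition at all: it is quoted verbatim as Theorem~2.14 of Charney--Sultan \cite{MR3339446} and used as a black box. There is therefore no ``paper's own proof'' to compare against.

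That said, your outline is precisely the strategy of the cited source. Charney and Sultan route the equivalence through the strongly contracting property, showing (building on earlier work of Bestvina--Fujiwara and Sultan) that in a $\CAT(0)$ space a geodesic is Morse iff it is strongly contracting, and separately that strongly contracting is equivalent to superlinear (in fact at least quadratic) lower divergence. Your subdivision argument for ``contracting $\Rightarrow$ $\ldiv_{\gamma}\succeq r^{2}$'' is the standard one and is correct as written. The one place your sketch is genuinely loose is the converse ``not contracting $\Rightarrow$ $\ldiv_{\gamma}$ not superlinear'': having a ball $B(x,\rho)$ with large projection diameter $D$ does not by itself guarantee that the detour you build has length comparable to $D$, since $d(x,\gamma)$ and $\rho$ need not be comparable to $D$. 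In $\CAT(0)$ one uses the $1$--Lipschitz property of nearest-point projection to get $D\leq 2\rho$, and then arranges (by a limiting/rescaling argument, or by extracting flat-strip behavior) witnesses where $\rho$, $d(x,\gamma)$, and $D$ are all of the same order; this is where the $\CAT(0)$ convexity you allude to is really spent. You have correctly identified this, together with the companion ``not contracting $\Rightarrow$ not Morse'' direction, as the substantive step.
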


\begin{rem}
Let $X$ be a one-ended geodesic space and $\gamma$ a bi-infinite periodic geodesic (i.e. there is some isometry of $X$ acts on $\gamma$ by translation). It is not hard to see that the divergence of $\gamma$ and the lower divergence of $\gamma$ are equivalent.
\end{rem}

\begin{prop}
For each integer $m\geq 3$ and real number $t>1$, the lower divergence of the geodesic ${\gamma_{t,m}}$ is at least quadratic and the geodesic ${\gamma_{t,m}}$ is Morse. 
\end{prop}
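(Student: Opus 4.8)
The plan is to deduce the Morse property from the lower divergence bound via Proposition~\ref{CS}: since a quadratic function is superlinear, it suffices to prove that $\ldiv_{\gamma_{t,m}}(r) \succeq r^2$. To that end I fix an arbitrary basepoint $p=\gamma_{t,m}(t_0)$, a radius $r$, and an arbitrary path $\eta$ lying outside $\ball{p}{r}$ and joining $\gamma_{t,m}(t_0-r)$ to $\gamma_{t,m}(t_0+r)$; the goal is the uniform estimate $\ell(\eta)\geq c\,r^2$ with $c>0$ independent of $t_0$, $r$, and $\eta$. The key structural observation is that, since $\gamma_{t,m}$ is a concatenation of the two-edge ``turns'' $a_mb_m$ and $a_mb_2$, every window $\gamma_{t,m}([t_0-r,t_0+r])$ contains at least a fixed positive fraction of turns of type $a_mb_2$: each block $w_i$ contributes one $a_mb_m$ turn against $\lfloor i^{t-1}\rfloor\geq 1$ turns of type $a_mb_2$, while the backward ray consists entirely of $a_mb_2$ turns. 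Hence there are $\Theta(r)$ such turns, occurring at vertices $s_j$ that are spread across all distances $d_j:=d(p,s_j)\in[0,r]$.

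Next I extract from $\eta$ one disjoint subsegment per $a_mb_2$ turn and bound each from below linearly. For such a turn let $\hat H_j$ denote the $a_m$-hyperplane crossed by $\gamma_{t,m}$ at $s_j$, with support $s_jH_{a_m}$. These hyperplanes all have the same type, hence are pairwise disjoint, and each separates $\gamma_{t,m}(t_0-r)$ from $\gamma_{t,m}(t_0+r)$; thus they are nested, $\eta$ must traverse every slab between consecutive ones, and I may choose disjoint subsegments $\eta_j\subseteq\eta$ joining a point $g_j$ of $s_jH_{a_m}$ to a point $g_{j+1}$ of $s_{j+1}H_{a_m}=s_ja_mb_2H_{a_m}$. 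Because $\eta$ avoids $\ball{p}{r}$, both $g_j$ and $g_{j+1}$ lie at distance at least $r-d_j$ from $s_j$. Extending the in-support geodesics $s_j\to g_j$ and $s_ja_mb_2\to g_{j+1}$ to geodesic rays $\alpha_j$ along $s_jH_{a_m}$ and $\alpha'_j$ along $s_ja_mb_2H_{a_m}$, Lemma~\ref{bl3} says that $\alpha_j\cup\alpha'_j$ is a quasi-geodesic with constants $K,L$ that are uniform, since all these turns are $G_{\Gamma_m}$-translates of a single configuration. As $g_j$ and $g_{j+1}$ sit at arc-parameter at least $r-d_j$ on the two sides of this quasi-geodesic, they are at distance at least $\tfrac{1}{K}(r-d_j)-L$ apart, whence $\ell(\eta_j)\geq d(g_j,g_{j+1})\geq \tfrac{1}{K}(r-d_j)-L$.

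Summing the disjoint contributions over the $\Theta(r)$ turns, with $d_j$ ranging over a positive density of $[0,r]$, gives
\[
\ell(\eta)\ \geq\ \sum_j\Bigl(\tfrac{1}{K}(r-d_j)-L\Bigr)\ \succeq\ r^2 ,
\]
and since $\eta$ was arbitrary this yields $\ldiv_{\gamma_{t,m}}(r)\succeq r^2$; the geodesic is then Morse by Proposition~\ref{CS}. (The $a_mb_m$ turns may be included as well, each contributing $\succeq (r-d_j)^{m-1}$ by Lemma~\ref{bl2}, but the $a_mb_2$ turns alone already force the quadratic bound.)

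The step I expect to be the main obstacle is the uniform, turn-by-turn lower bound together with the bookkeeping that keeps the subsegments $\eta_j$ disjoint. Lemma~\ref{bl3} only asserts that a single $a_mb_2$ turn has divergence \emph{dominated} by a linear function, so a linear lower bound cannot be read off from it directly; instead the linear lower bound for each $\eta_j$ must come from the quasi-geodesic conclusion of Lemma~\ref{bl3}, namely that points at arc-parameter $\geq r-d_j$ on $\alpha_j\cup\alpha'_j$ are ambient-distance $\gtrsim r-d_j$ apart. Making the disjointness precise requires the standard hyperplane combinatorics, that the equal-type hyperplanes $\hat H_j$ are nested and traversed by $\eta$ slab by slab, exactly as in the lower-bound arguments imported from \cite{MR3314816}.
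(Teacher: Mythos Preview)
Your argument is correct and follows the same strategy as the paper: slice $\eta$ along the nested $a_m$--hyperplanes crossed by $\gamma_{t,m}$, bound each resulting subsegment from below linearly using Lemmas~\ref{bl2}--\ref{bl3} (the paper invokes Remark~\ref{r1} rather than the quasi-geodesic inequality you extract from Lemma~\ref{bl3}, but either mechanism works), sum to get a quadratic lower bound, and then apply Proposition~\ref{CS}. The only simplification in the paper is that it uses \emph{all} consecutive turns $s_i=\gamma_{t,m}(u+2i)$ rather than restricting to the $a_mb_2$ ones, so no density argument is needed---both turn types already give $\ell(\eta_i)\geq M_1(r-2i)-N_1$, and the sum $\sum_{i=0}^{\lfloor r/2\rfloor}(r-2i)$ is immediately quadratic.
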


\begin{proof}
For each $u$ and each $r$ large enough, let $\eta$ be any path outside $B\bigl(\gamma_{t,m}(u),r\bigr)$ connecting $\gamma_{t,m}(u-r)$ and $\gamma_{t,m}(u+r)$. Since $\gamma_{t,m}{[u-r,u+r]}$ is a geodesic and $\eta$ is a path with the same endpoints, $\eta$ must cross each hyperplane crossed by $\gamma_{t,m}{[u-r,u+r]}$ at least once. Without loss of generality, we can assume $\gamma_{t,m}(u)$ is a group element and $\gamma_{t,m}(u+2)=\gamma_{t,m}(u)a_mb_*$, where $b_*$ is either $b_m$ or $b_2$. 

Let $s_i=\gamma_{t,m}(u+2i)$ for each integer $i$ between $0$ and $r/2$. Recall that $H_{a_m}$ is the support of the hyperplane that crosses the edge $a_m$ with one endpoint $e$. Let $g_i$ be the point at which $\eta$ first crosses $s_iH_{a_m}$, where $g_i$ lies in the component of the complement of the hyperplane in $s_iH_{a_m}$ containing $\gamma_{t,m}(u)$. Let $u_i$ denote the geodesic connecting $s_i$ and $g_i$ which runs along $s_iH_{a_m}$. Let $m_i$ be a geodesic with the initial point $s_i$ which runs along subsegment of $\gamma_{t,m}$ connecting $s_i$, $s_{i+1}$ followed by $u_{i+1}$ for each integer $i$ between $0$ and $r/2-1$. Let $\eta_i$ be a subsegment of $\eta$ connecting $g_i$ and $g_{i+1}$ for each integer $i$ between $0$ and $r/2-1$. 

Since
\[d(g_i,s_i)\geq d\bigl(g_i,\gamma_{t,m}(u)\bigr)-d\bigl(s_i,\gamma_{t,m}(u)\bigr)\geq r-2i\]
and
\[d(g_{i+1},s_i)\geq d\bigl(g_{i+1},\gamma_{t,m}(u)\bigr)-d\bigl(s_i,\gamma_{t,m}(u)\bigr)\geq r-2i,\]
we have \[\ell(\eta_i)\geq M_1(r-2i)-N_1\] for some constants $M_1$ and $N_1$ not depending on $r$ by Lemma~\ref{bl2}, Lemma~\ref{bl3} and Remark \ref{r1}.

Thus,
\begin{align*}
 \ell(\eta)\geq \sum_i \ell(\eta_i)\geq \sum_i \bigl(M_1(r-2i)-N_1\bigr) \geq Mr^2-N
\end{align*}
for some constant $M$ not depending on $r$ and $u$.
Therefore, $\rho_{\gamma_{t,m}}(r,u)$ is bounded below by $Mr^2-N$ for all $u$. Thus, $\gamma_{t,m}$ has at least quadratic lower divergence.
Thus, the geodesic ${\gamma_{t,m}}$ is Morse for each integer $m\geq 3$ and $t>1$ by Proposition \ref{CS}.
\end{proof}

Thus, for each integer $m\geq 3$ and each $s$ in $(m-1,m)$ the geodesic $\gamma_{t,m}$ is Morse and has the divergence function equivalent to $r^s$, where $t=1/(s-m+1)$. Before showing the existence of Morse geodesic with polynomial divergence $r^m$ in each $X_m$, we need some background on Van Kampen diagram.
\begin{defn}[Reduced Van Kampen diagram]
A \emph{reduced Van Kampen diagram} over a group presentation $G=\langle S; R\rangle$, where all $r\in R$ are cyclically reduced words in the free group $F(S)$, is a finite 2-dimensional complex $D$ satisfying the following additional properties:
\begin{enumerate}
\item The complex $D$ is connected and simply connected.
\item Each 1-cell of $D$ is labeled by an arrow and a letter in $S$.
\item Some 0-cell which belongs to the topological boundary of $D$ is specified as a base-vertex.
\item The label of every simple boundary path of a 2-cell of $D$ is an element of $R^*$, where $R^*$ is obtained from $R$ by adding all cyclic permutations of elements of $R$ and of their inverses.
\item The complex $D$ does not contain two 2-cells $B_1$, $B_2$ such that their boundary cycles share a common edge and such that their boundary cycles, read starting from that edge, clockwise for one of the 2-cells and counter-clockwise for the other, are equal as words in $S\cup S^{-1}$.
\end{enumerate} 
\end{defn}

\begin{thm}[\cite{MR0577064}]
\label{vkthm}
Given a group presentation $G=\langle S; R\rangle$, where all $r\in R$ are cyclically reduced words in the free group $F(S)$, and a freely reduced word $w$ in $S \cup S^{-1}$. Then $w=1$ in $G$ iff there exists a reduced Van Kampen diagram $D$ over the presentation $G=\langle S; R\rangle$ whose boundary label is freely reduced and is equal to w.
\end{thm}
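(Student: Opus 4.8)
The plan is to prove this classical fact (van Kampen's Lemma) as a biconditional, treating the two implications separately. The ``if'' direction is the easy one and proceeds by induction; the ``only if'' direction is where the real work lies, since one must build a diagram from algebraic data and then reduce it.

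For the ``if'' direction I would assume a reduced van Kampen diagram $D$ with freely reduced boundary label $w$ and argue by induction on the number of $2$-cells that $w=1$ in $G$. In the base case $D$ has no $2$-cells, so by property~(1) it is a connected, simply connected $1$-complex, i.e.\ a tree; its boundary walk traverses each edge once in each direction, so the boundary label freely reduces to the empty word, and since $w$ is freely reduced by hypothesis we get $w=1$ already in $F(S)$. For the inductive step I would select an ``outermost'' $2$-cell $B$ whose boundary meets $\boundary D$ in an arc, remove $B$ to form a smaller diagram $D'$, and observe that the boundary word of $D$ differs from that of $D'$ by inserting a conjugate $u\,r^{\pm 1}\,u^{-1}$ of the label $r\in R^{*}$ of $B$. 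Since $r$ is a relator this insertion is trivial in $G$, and the boundary of $D'$ is trivial in $G$ by induction, so $w=1$ in $G$. (Degenerate configurations, where $B$ touches $\boundary D$ in more than one arc or its removal disconnects $D$, need to be handled by choosing $B$ carefully, but they do not affect the conclusion.)

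For the ``only if'' direction I would start from $w=1$ in $G$, which means $w$ lies in the normal closure of $R$ in $F(S)$, so that $w =_{F(S)} \prod_{i=1}^{k} u_i\, r_i^{\epsilon_i}\, u_i^{-1}$ for some $u_i\in F(S)$, $r_i\in R$, and $\epsilon_i\in\{\pm 1\}$. From this I would assemble a first, not-necessarily-reduced diagram as a wedge of $k$ ``lollipops'' at a common base-vertex: the $i$-th lollipop is a stem-path labeled $u_i$ ending in a single $2$-cell whose boundary label is $r_i^{\epsilon_i}$. Reading around the boundary of this wedge produces exactly the word $\prod_i u_i r_i^{\epsilon_i} u_i^{-1}$, which is freely equal to $w$ but may fail to be freely reduced and may violate property~(5).

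The hard part is the reduction step, which turns this crude diagram into a reduced one whose boundary is precisely the freely reduced word $w$. I would iterate two moves: \emph{folding} two edges incident to a common vertex that carry the same label and orientation, and \emph{cancelling} a pair of $2$-cells that share an edge and have mirror-image boundary labels (a violation of property~(5)), after which the resulting hole is sewn shut. Each fold strictly decreases the number of edges and each cancellation strictly decreases the number of $2$-cells, so the process terminates; crucially, neither move changes the free-reduction class of the boundary word, so the boundary continues to represent $w$ in $F(S)$. When no move applies, property~(5) holds and the boundary is freely reduced, hence equal to the freely reduced word $w$, giving the required reduced diagram. The main obstacle to watch is precisely this step: one must verify that folds and cancellations can always be performed while keeping the complex connected, simply connected, and planar, and that the terminal diagram is simultaneously reduced and carries the prescribed boundary label; careful bookkeeping of each move's effect on the boundary word is where the care is genuinely needed.
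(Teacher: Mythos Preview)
The paper does not supply its own proof of this theorem: it is stated with a citation to the literature (Lyndon--Schupp) and used as a black box in the proof of Proposition~\ref{propo}. So there is no ``paper's proof'' to compare against; you have written out the classical argument that the cited reference contains.

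Your sketch is the standard one and is essentially correct. One small wrinkle worth tightening in the ``if'' direction: your induction hypothesis, as phrased, assumes the boundary label of the diagram is freely reduced, but after deleting an outermost $2$-cell the boundary of $D'$ need not be freely reduced. The clean fix is to induct on the weaker statement ``the boundary word of any van Kampen diagram represents $1$ in $G$,'' with no reducedness assumption; the conclusion for freely reduced $w$ then follows as a special case. In the ``only if'' direction your description of the lollipop bouquet followed by iterated folding and cancellation of reducible pairs is exactly the textbook route, and you have correctly flagged the genuine technical point, namely that each cancellation must be performed so as to preserve planarity and simple connectivity (the case where the two $2$-cells share their entire boundaries, producing a sphere, needs separate handling).
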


We now finish the proof of the Proposition \ref{mp} by the following proposition.

\begin{prop} 
\label{propo}
For each $m\geq 2$, let $\alpha_m$ be a bi-infinite geodesic containing $e$ and labeled by $\cdots a_mb_ma_mb_m\cdots$. Then $\alpha_m$ is a Morse bi-infinite geodesic and the divergence of $\alpha_m$ is equivalent to $r^m$ in $X_m$.
\end{prop}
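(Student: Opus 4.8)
The plan is to prove the two halves of the equivalence $\Div_{\alpha_m}\sim r^m$ separately and then read off the Morse property from superlinearity together with periodicity. Throughout I may assume $\alpha_m(0)=e$, so that $\Div_{\alpha_m}$ is the divergence of the two rays of $\alpha_m$ issuing from $e$. The upper bound is immediate and uniform in $m\geq 2$: the divergence of the whole space $X_m$ is equivalent to $r^m$ by Dani--Thomas (the fact already invoked in the proof of Lemma~\ref{bl1}), and since the Davis complex $X_m$ has the geodesic extension property, Remark~\ref{rma1} tells us the divergence of any pair of rays with a common initial point is dominated by $\Div_{X_m}$. Applying this to the two rays of $\alpha_m$ at $e$ gives $\Div_{\alpha_m}\preceq r^m$.

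For the lower bound I would, for $m\geq 3$, mirror the hyperplane--crossing argument of the preceding proposition on lower divergence, but feed it the stronger per--step estimate of Lemma~\ref{bl2} in place of the linear Lemma~\ref{bl3}. Fix $r$ large and let $\eta$ be any path outside $B(e,r)$ joining $\alpha_m(-r)$ and $\alpha_m(r)$; since $\alpha_m$ restricted to $[-r,r]$ is geodesic, $\eta$ must cross every hyperplane it crosses. Set $s_i=(a_mb_m)^i=\alpha_m(2i)$ for $0\leq i\leq \lfloor r/4\rfloor$. Let $g_i$ be the first crossing of $\eta$ with the support $s_iH_{a_m}$, let $u_i$ be the geodesic along $s_iH_{a_m}$ from $s_i$ to $g_i$, and let $m_i$ be the geodesic from $s_i$ running along the segment $s_i\to s_{i+1}$ labeled $a_mb_m$ followed by $u_{i+1}$, which is a geodesic by Lemma~\ref{bl2}. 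The pair $(u_i,m_i)$ is precisely the $s_i$--translate of the pair in Lemma~\ref{bl2}, so its divergence is equivalent to $r^{m-1}$. Because $\abs{s_i}=2i\leq r/2$, both $g_i$ and $g_{i+1}$ lie at distance at least $r-2i\geq r/2$ from $s_i$, so the subsegment $\eta_i$ of $\eta$ from $g_i$ to $g_{i+1}$ lies outside $B(s_i,r-2i)$ and, by Remark~\ref{r1}, has length at least $M_1(r-2i)^{m-1}-N_1$.

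Exactly as in the previous proposition the crossings $g_0,g_1,\dots$ occur in order along $\eta$, so the $\eta_i$ are disjoint subsegments and
\[
\ell(\eta)\geq \sum_{i=0}^{\lfloor r/4\rfloor}\bigl(M_1(r-2i)^{m-1}-N_1\bigr)\geq Mr^m-N,
\]
since each of the $\sim r/4$ terms is at least $M_1(r/2)^{m-1}-N_1$. This yields $r^m\preceq\Div_{\alpha_m}$, hence $\Div_{\alpha_m}\sim r^m$ for $m\geq 3$. For the Morse conclusion I would observe that $\alpha_m$ is periodic, being translated along itself by left multiplication by $a_mb_m$; by the remark preceding Proposition~\ref{CS} its lower divergence is therefore equivalent to its divergence $r^m$, which is superlinear for $m\geq 2$, so Proposition~\ref{CS} shows $\alpha_m$ is Morse.

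The one genuinely separate point is the base case $m=2$, where Lemmas~\ref{bl1}--\ref{bl3} are not available (they are stated only for $m\geq 3$), so the summation above cannot be run. Here I would instead appeal to Dani--Thomas \cite{MR3314816}, where $\alpha_2$ is already shown to have quadratic divergence in $X_2$ (as noted in the discussion following Proposition~\ref{mp}); the Morse property then follows once more from Proposition~\ref{CS} via the periodicity of $\alpha_2$. I expect this $m=2$ case, together with the bookkeeping that certifies the $\eta_i$ are truly disjoint with the stated distance bounds, to be the only real friction. The heart of the matter is the clean observation that summing the $r^{m-1}$ divergence supplied by Lemma~\ref{bl2} over the $\sim r$ translates $s_i$ produces exactly one extra power of $r$, matching the upper bound coming from $\Div_{X_m}\sim r^m$.
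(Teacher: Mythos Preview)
Your argument is correct but follows a genuinely different route from the paper. For the lower bound when $m\geq 3$, you telescope: you track the $\sim r$ hyperplanes $s_iH_{a_m}$ crossed by $\alpha_m$ between $e$ and $\alpha_m(r/2)$, and at each step invoke Lemma~\ref{bl2} to extract a contribution of order $(r-2i)^{m-1}$ from the subarc $\eta_i$ of the detour; summing these gives $r^m$. This is exactly the skeleton of the two preceding propositions, with Lemma~\ref{bl2} substituted for Lemma~\ref{bl3}, so you are reusing machinery already in hand rather than introducing anything new. The paper instead proves the lower bound in one stroke, uniformly for all $m\geq 2$: it fills the loop formed by $\gamma$ and $\alpha_m|_{[-r,r]}$ with a reduced Van Kampen diagram and pushes a $b_m$--corridor from the edge $b_m$ at $e$ out to the boundary $\gamma$, which forces $\gamma$ to contain a subarc joining a point on a ray labeled $b_ma_mb_ma_m\cdots$ to a point on a ray labeled $b_{m-1}a_{m-1}b_{m-1}a_{m-1}\cdots$; the divergence of that pair is already known to be $\sim r^m$ from Dani--Thomas (Proposition~5.3 of \cite{MR3314816}), so one application of Remark~\ref{r1} finishes. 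What the paper's approach buys is uniformity (no separate $m=2$ case) and a single $r^m$ estimate rather than a sum; what yours buys is economy, since it needs no Van Kampen diagrams and no new external input beyond the lemmas already proved in the paper. Your handling of the Morse conclusion via periodicity and Proposition~\ref{CS} matches the paper exactly, and your appeal to \cite{MR3314816} for $m=2$ is consistent with the paper's own remark that this case is already contained there.
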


\begin{figure}
\labellist
\small
\pinlabel $a)$ at 125 520
\pinlabel $r$ at 360 290
\pinlabel $\alpha_m(t)$ at 425 165
\pinlabel $\alpha_m$ at 125 165
\pinlabel $\gamma$ at 250 350

\endlabellist
\centering
\includegraphics[scale=0.18]{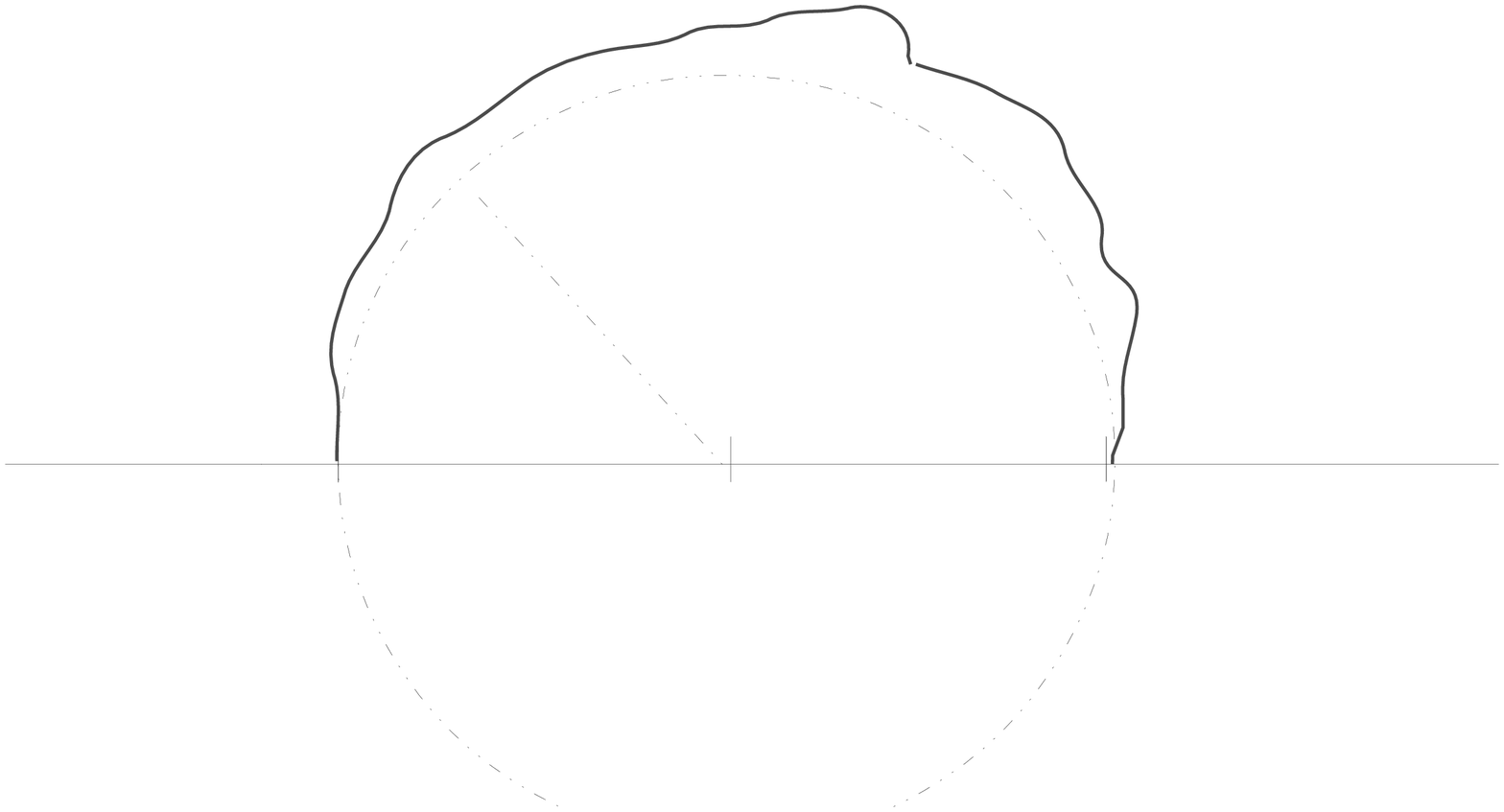}
\labellist
\small
\pinlabel $b)$ at 125 520
\pinlabel $r$ at 360 290
\pinlabel $\alpha_m(t)$ at 425 165
\pinlabel $\alpha_m$ at 125 165
\pinlabel $\gamma$ at 250 350
\pinlabel $\gamma_1$ at 580 350
\pinlabel $\alpha'_m$ at 715 165
\pinlabel $\beta'_m$ at 610 520

\endlabellist
\centering
 \includegraphics[scale=0.18]{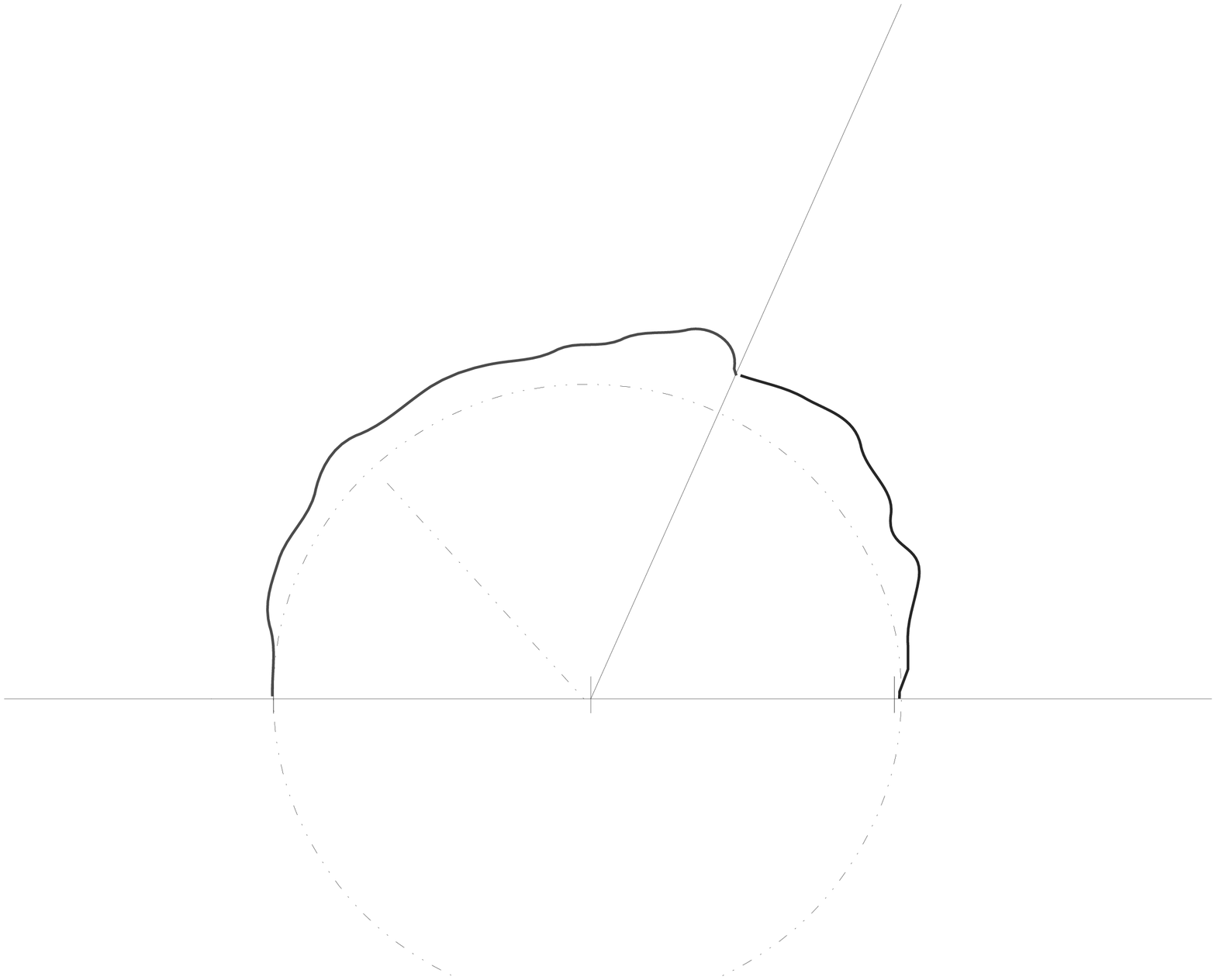}
\caption{The path $\gamma$ lies outside $B\bigl(\alpha_m(t),r\bigr)$ with endpoints $\alpha_m(t-r)$, $\alpha_m(t+r)$ and the subpath $\gamma_1$ of $\gamma$ connecting two points of $\alpha'_m$ and $\beta'_m$.}
\label{ma4}
\end{figure}

\begin{proof}
We first show that the divergence of $\alpha_m$ is equivalent to $r^m$ in $X_m$. Since the divergence of $X_m$ is equivalent to $r^m$ (see Section 5 in \cite{MR3314816}), the divergence of $\alpha_m$ is dominated by $r^m$. We only need to show that $r^m\preceq Div_{\alpha_m}$. 

For each $r>0$, let $\gamma$ be an arbitrary path from $\alpha_m(-r)$ to $\alpha_m(r)$ which lies outside the open ball with radius $r$ about $e$. 

We are going to show that there exists a subpath $\gamma_1$ of $\gamma$ connecting two points of $\alpha'_m$ and $\beta'_m$, where $\alpha'_m$ and $\beta'_m$ are two geodesic rays issuing from $e$, $\alpha'_m$ is labeled by $b_ma_mb_ma_m\cdots$ or $a_mb_ma_mb_m\cdots$ and $\beta'_m$ is labeled by $b_{m-1}a_{m-1}b_{m-1}a_{m-1}\cdots$ or $a_{m-1}b_{m-1}a_{m-1}b_{m-1}\cdots$ (see Figure \ref{ma4}.b). 

We will use the same technique as in \cite{MR1254309} for this argument. We observe that the path $\gamma$ and the subsegment of $\alpha_m$ between $\alpha_m(-r)$, $\alpha_m(r)$ form a loop in $X_m$ which may fill in with a reduced Van Kampen diagram $D$ by Theorem \ref{vkthm}. Here we can assume that each 1-cell of $D$ is unoriented and each 2-cell of $D$ labeled by $s^2$ for some generator $s$ has been shrunk to an unoriented edge with label $s$. We want to obtain a corridor in $D$ that is a concatenation of 2--cells labeled by $b_ma_{m-1}b_ma_{m-1}$ or $b_mb_{m-1}b_mb_{m-1}$ alternately such that the first edge labeled by $b_m$ of the corridor must lie in $\alpha_m$ with one endpoint $e$ and the last edge labeled by $b_m$ of the corridor must lie in the boundary of $D$ (see Figure \ref{ma3}). 

Since the path $\gamma$ lies outside the ball $B(e,r)$, the edge $b_m^{(1)}$ of $\alpha_m$ with one endpoint $e$ and labeled by $b_m$ must lie in some 2--cell of $D$. By the presentation of $G_{\Gamma_m}$, the edge $b_m^{(1)}$ must lie in a 2--cell $c_1$ labeled by $b_ma_{m-1}b_ma_{m-1}$ or $b_mb_{m-1}b_mb_{m-1}$. If the edge $b_m^{(2)}$ that is opposite to $b_m^{(1)}$ in $c_1$ lies in the the boundary of $D$, it is obvious that we can find the corridor we need. Otherwise, $b_m^{(2)}$ must lie in some 2--cell $c_2$ labeled by $b_ma_{m-1}b_ma_{m-1}$ or $b_mb_{m-1}b_mb_{m-1}$ of $D$. Since $D$ is reduced, the labels of $c_1$ and $c_2$ must be different. By arguing inductively, we obtain a corridor in $D$ that is a concatenation of 2--cells labeled by $b_ma_{m-1}b_ma_{m-1}$ or $b_mb_{m-1}b_mb_{m-1}$ alternately such that the first edge labeled by $b_m$ of the corridor lies in $\alpha_m$ with one endpoint $e$ and the last edge $b_m^{(n)}$ labeled by $b_m$ of the corridor lies in the boundary of $D$. If $b_m^{(n)}$ is an edge of $\alpha$, the diagram $D$ would not be planar topologically. Thus, $b_m^{(n)}$ must be an edge of $\gamma$. 

\begin{figure}
\labellist
\small
\pinlabel $r$ at 345 365
\pinlabel $\alpha_m(t)$ at 410 220
\pinlabel $\alpha_m$ at 80 220
\pinlabel $\gamma$ at 215 380
\pinlabel $\gamma_1$ at 630 380
\pinlabel $\alpha'_m$ at 760 220
\pinlabel $\beta'_m$ at 580 525

\pinlabel $b_m$ at 450 275
\pinlabel $b_m$ at 470 320
\pinlabel $b_m$ at 490 365
\pinlabel $b_m$ at 510 410
\pinlabel $b_m$ at 530 455

\pinlabel $a_{m-1}$ at 385 280
\pinlabel $b_{m-1}$ at 405 325
\pinlabel $a_{m-1}$ at 425 370
\pinlabel $b_{m-1}$ at 445 415

\pinlabel $a_{m-1}$ at 525 280
\pinlabel $b_{m-1}$ at 545 325
\pinlabel $a_{m-1}$ at 565 370
\pinlabel $b_{m-1}$ at 585 415

\endlabellist
\centering
\includegraphics[scale=0.3]{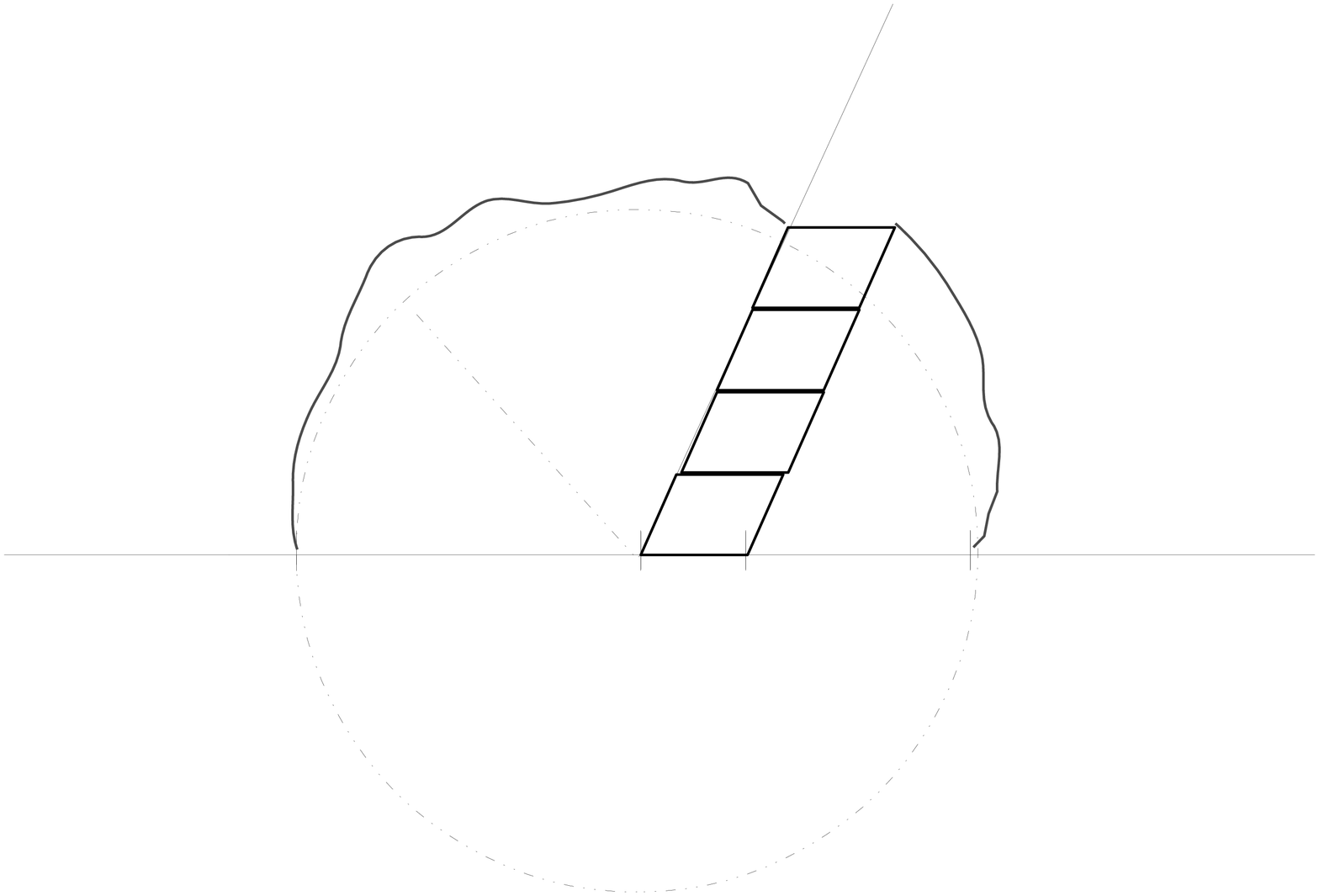}

\caption{}
\label{ma3}
\end{figure}

Therefore, there exists a subsegment $\gamma_1$ of $\gamma$ connecting two points of $\alpha'_m$ and $\beta'_m$, where $\alpha'_m$ and $\beta'_m$ are two geodesic rays issuing from $e$, $\alpha'_m$ is labeled by $b_ma_mb_ma_m\cdots$ or $a_mb_ma_mb_m\cdots$ and $\beta'_m$ is labeled by $b_{m-1}a_{m-1}b_{m-1}a_{m-1}\cdots$ or $a_{m-1}b_{m-1}a_{m-1}b_{m-1}\cdots$. Therefore, the divergence of the pair $(\alpha'_m,\beta'_m)$ is dominated by the divergence $\alpha_m$. Also, the divergence of the pair $(\alpha'_m,\beta'_m)$ is equivalent to $r^m$ (see the proof of Proposition 5.3 in \cite{MR3314816}). Therefore, the divergence of $\alpha_m$ is equivalent to $r^m$. We observe that $\alpha_m$ is a periodic geodesic. Therefore, the lower divergence of $\alpha_m$ and the divergence of $\alpha_m$ are equivalent. Thus, the lower divergence of $\alpha_m$ is super linear. This implies that $\alpha_m$ is a Morse geodesic. 
\end{proof}

We are now ready for the proof of the main theorem. 
\begin{main}
For each integer $m\geq 2$, there is a $\CAT(0)$ space $Y_m$ with a proper, cocompact action of some finitely generated group such that for each $s$ in $[2,m]$ there is a Morse geodesic in $Y_m$ with the divergence function equivalent to $r^s$.
\end{main}
\begin{proof}
For each integer $m\geq 2$ let $\Omega_m$ be the disjoint union of $(m-1)$ graphs $\Gamma_i$ for $i$ in $\{2,3,\cdots,m\}$ and $Y_m$ be the Davis complex with respect to $\Omega_m$. Then the associated right-angled Coxeter group $G_{\Omega_m}=G_{\Gamma_2}*G_{\Gamma_3}*\cdots*G_{\Gamma_m}$ acts properly and cocompactly on $Y_m$. Moreover, there is a bipartite tree $T$ that encodes the structure of $Y_m$ as follows:
\begin{enumerate}
\item There are two types of vertices of $T$ in the bipartition which are called \emph{nontrivial vertices} and \emph{trivial vertices}.
\item Each nontrivial vertex $u$ of $T$ corresponds to a copies $P_u$ (called \emph{nontrivial vertex space}) of $X_i$ (the Davis complex with respect to the graph $\Gamma_i$) in $Y_m$ for some $i$ in $\{2,3,\cdots,m\}$ and each trivial vertex $v$ of $T$ corresponds to a single point $y_v$ in $Y_m$.
\item Two nontrivial vertex spaces $P_u$ and $P_{u'}$ are different if $u$ and $u'$ are different nontrivial vertices in $T$. Similarly, two points $y_v$ and $y_{v'}$ are different if $v$ and $v'$ are different trivial vertices in $T$.
\item A trivial vertex $v$ is adjacent to a nontrivial vertex $u$ in $T$ iff $y_v$ belongs to $P_u$.
\item The union of all nontrivial vertex spaces is equal to $Y_m$ and there exist a nontrivial vertex space which is a copy of $X_i$ for each $i$ in $\{2,3,\cdots,m\}$. Moreover, each nontrivial vertex space is isometric embedding into $Y_m$
\item Two nontrivial vertex spaces $P_u$ and $P_{u'}$ have nonempty intersection iff the two nontrivial vertices $u$ and $u'$ are both adjacent to a trivial vertex $v$ in $T$. Moreover, the intersection of $P_u$ and $P_{u'}$ is a single set consisting of the point $y_v$.
\end{enumerate}

From the above structure of $Y_m$, it is easy to see each simple path connecting two different points in a nontrivial vertex space $P$ must lie in $P$. Let $s$ be an arbitrary number in $[2,m]$. If $s=2$, then let $\gamma_2$ be a Morse bi-infinite geodesic in $X_2$ with divergence function equivalent to $r^2$. Otherwise, let $\gamma_s$ be a Morse bi-infinite geodesic in $X_n$ with divergence function equivalent to $r^s$ where $3\leq n\leq m$ and $s$ in $(n-1,n]$. We can consider the bi-infinite geodesic $\gamma_s$ is contained in some nontrivial vertex space $P$ of $Y_m$ such that $\gamma_s$ is a Morse geodesic in $P$ and the divergence of $\gamma_s$ in $P$ is equivalent to $r^s$. 

We now prove that the divergence of $\gamma_s$ is also equivalent to $r^s$ in $Y_m$. Since the nontrivial vertex space $P$ is isometric embedding into $Y_m$, the divergence of $\gamma_s$ is dominated by $r^s$ in $Y_m$. For each positive number $r$ let $\eta$ be an arbitrary path outside $B(\gamma_s(0),r)$ in $Y_m$ connecting $\gamma_s(-r)$ and $\gamma_s(r)$. Since we can obtain a simple path $\eta'$ from $\eta$ such that $\eta'$ is also a path outside $B(\gamma_s(0),r)$ in $Y_m$ connecting $\gamma_s(-r)$, $\gamma_s(r)$ and the length of $\eta'$ is less than or equal the length of $\eta$, we can assume that $\eta$ is a simple path. By the above observation, the path $\eta$ is contained in $P$. Thus, the divergence of $\gamma_s$ in $P$ is dominated by the divergence of $\gamma_s$ in $Y_m$. Therefore, the divergence of $\gamma_s$ is also equivalent to $r^s$ in $Y_m$.

We now prove that $\gamma_s$ is a Morse geodesic in $Y_m$. Let $\beta$ be an arbitrary $(K,L)$--quasi-geodesic with the endpoints on $\gamma_s$. We need to prove that $\beta$ lies in some $M$--neighborhood of $\gamma_s$ where $M$ only depends on $K$ and $L$. By Lemma 1.11 of \cite{MR1744486} III.H, in , we can assume that $\beta$ is a continuous $(K,L)$--quasi-geodesic. Also it is not hard to see each continuous $(K,L)$--quasi-geodesic lies in some $M_1$-neighborhood of some $(K_1,L_1)$--quasi-geodesic simple path where $M_1$, $K_1$ and $L_1$ only depend on $K$ and $L$. Thus, we can assume that $\beta$ is a $(K,L)$--quasi-geodesic simple path. By the above observation again, the path $\beta$ is contained in $P$. Therefore, the path $\beta$ lies in some $M$--neighborhood of $\gamma_s$ where $M$ only depends on $K$ and $L$ since $\gamma_s$ is a Morse geodesic in $P$. Thus, $\gamma_s$ is a Morse geodesic in $Y_m$.
\end{proof}


\bibliographystyle{alpha}
\bibliography{Tran}
\end{document}